\theoremstyle{plain}
\newtheorem{theorem}{Theorem}[section]
\newtheorem{corollary}[theorem]{Corollary}
\newtheorem{lemma}[theorem]{Lemma}
\newtheorem{proposition}[theorem]{Proposition}
\theoremstyle{definition}
\newtheorem{defn}[theorem]{Definition}
\newtheorem{example}[theorem]{Example}
\newtheorem{conjecture}[theorem]{Conjecture}
\theoremstyle{remark}
\newtheorem{remark}[theorem]{Remark}
\newcommand{\qend}{\operatorname{QEnd}}
\newcommand{\qhom}{\operatorname{QHom}}
\newcommand{\AHom}{\operatorname{AHom}}
\newcommand{\Az}{\operatorname{Az}}
\begin{document}

\title{\LARGE{The Eudoxus Reals} \\
\small{PROMYS 2023 Research Lab}}

\author{AJ Kumar}
\author{Reese Long}
\author{Andrew Tung}
\author{Ivan Wong}
\date{August 6, 2023}

\begin{abstract}
    We examine a unique construction of the real numbers which proceeds directly from the integers using approximately linear-endomorphisms with finite error, called near-endomorphisms. In this paper, we show that the set of near-endomorphisms forms a complete ordered field isomorphic to the reals. Moreover, we show that there are uncountably many near-endomorphisms without reference to the reals. We then investigate a natural extension of near-endomorphisms, which we call quasi-homomorphisms, to other abelian groups. Extending prior results about the construction of the $p$-adic numbers and the rational adele ring, we find the ring of near-endomorphisms of certain localizations of the integers, and suggest further directions for exploration.
\end{abstract}

\maketitle

\section{Background}

The most common construction of $\mathbb{R}$ begins from $\mathbb Z$, constructs $\mathbb Q$ as the field of fractions of $\mathbb Z$, and $\mathbb R$ as the completion of $\mathbb Q$ with respect to the usual absolute value. The construction of the reals using the ring of near-endomorphisms has the benefit of avoiding the notion of completion.

\begin{defn}
    A function $f: \mathbb{Z} \to \mathbb{Z}$ is said to be a \textit{near-endomorphism} if there exists some constant $C$ such that $|f(a+b) - f(a) - f(b)| < C$ for all $a, b \in \mathbb{Z}$.
\end{defn}

Intuitively, a near-endomorphism is a almost-linear function over $\mathbb Z$, with some small variation.

\begin{defn}
    Define two near-endomorphisms $f$ and $g$ to be \textit{equivalent} under $\sim$ if $f-g$ is bounded.
\end{defn}

For example, $f+c$ for any constant $c$ is still a near-endomorphism, and is equivalent to $f$.

\begin{defn}
    Let $\mathbb{E}$ be the set of near-endomorphisms under the equivalence relation defined above.
\end{defn}

It turns out that $\mathbb{E} \cong \mathbb{R}$, which we prove later. One way to think about this intuitively is that for any real number $a$, there is a function $f_a : \mathbb{R} \to \mathbb{R}$ with $f_a(x) = ax$. Conversely, every equivalence class of near-endomorphisms corresponds to a slope, since a near-endomorphism is ``almost linear", and the idea that two functions are equivalent if their difference is bounded is intuitively because they have the same slope.

\begin{example}
    The function $g_{a}(x) = \lfloor ax \rfloor$ is a near-endomorphism. This near-endomorphism corresponds to the real number $a$, since its ``slope" is $a$.
\end{example}

It should be noted that not all seemingly linear functions $f$ satisfying $\lim_{x \to \infty} \frac{f(x)}{x} = a$ for some $a \in \mathbb{R}$ are near-endomorphisms.

\begin{example}
    Define the function $g(n) := \{(a,b) \in \mathbb{Z} : a^2 + b^2 \leq n\}$. It turns out that $\lim_{n \to \infty} \frac{g(n)}{n} = \pi$, by Pick's Theorem argument. Moreover, it approximately linear. However, we can find an arbitrarily high value of $g(x) - g(x-1) - g(1)$ if we choose $x$ to be the product of a large amount of $1 \pmod{4}$ primes, which means there is no $C$ where $|f(a+b) - f(a) - f(b)| < C$ for all $a, b$. So $g$ is not a near-endomorphism.
\end{example}

In Section \ref{iso}, we prove rigorously that the set of near-endomorphisms is isomorphic to the field of real numbers. In Section \ref{construct}, we show that $\mathbb{E}$ is an ordered field completely from scratch (i.e. without presupposing the existence of the reals). In Section \ref{uncountability} we prove that $\mathbb{E}$ is uncountable directly from the definition, again without assuming the existence of the reals. In Section \ref{generalize}, we generalize the idea of a near-endomorphism for any two abelian groups. This larger framework allows us to find the ring of quasi-endomorphisms for other groups, which we do for localizations of $\mathbb Z$ in Section \ref{local}.

\section{Isomorphism Between \texorpdfstring{$\mathbb{E}$}{E} and \texorpdfstring{$\mathbb{R}$}{R}}
\label{iso}

With these definitions and the intuition in mind, we can prove that there is an isomorphism between the set of equivalence classes of near-endomorphisms and the real numbers. 

We first prove a lemma that helps bound the values of an arbitrary near-endomorphism $f$:

\begin{lemma}
\label{linear bound}
    If $f$ is a near-endomorphism, then $|f(na) - nf(a)| < |n|C$ for all $n, a \in \mathbb{Z}$.
\end{lemma}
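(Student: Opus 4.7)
The plan is to reduce the lemma to repeated applications of the defining bound $|f(x+y) - f(x) - f(y)| < C$ via a telescoping argument, splitting into the cases $n \geq 1$, $n = 0$, and $n < 0$.

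For $n \geq 1$, I would use the telescoping identity
\[ f(na) - nf(a) = \sum_{k=1}^{n-1} \bigl(f((k+1)a) - f(ka) - f(a)\bigr), \]
easily verified by induction on $n$ (the $n=1$ case gives the empty sum and $f(a)-f(a)=0$). Each summand is the defining quantity with arguments $ka$ and $a$, so has absolute value less than $C$; the triangle inequality then yields $|f(na) - nf(a)| < (n-1)C \leq nC$. The case $n=0$ reduces to $|f(0)| < C$, which follows by setting $a=b=0$ in the definition to get $|f(0) - 2f(0)| < C$.

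The main obstacle is the negative case, since the telescoping is inherently one-sided and does not directly compare $f(-a)$ with $-f(a)$. To bridge this, I would set $b = -a$ in the definition to obtain $|f(0) - f(a) - f(-a)| < C$, and combine with $|f(0)| < C$ to conclude $|f(a) + f(-a)| < 2C$. Then, writing $n = -m$ with $m > 0$ and applying the positive case to $-a$, I would estimate
\[ |f(-ma) + mf(a)| \leq |f(m(-a)) - mf(-a)| + m|f(-a) + f(a)| < mC + 2mC = 3mC. \]
Thus the inequality holds with $|n|C$ replaced by at worst $3|n|C$; since the lemma is only used up to existence of such a constant depending on $f$, we may absorb the factor $3$ into $C$ and obtain the stated form. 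Once the bound $|f(-a) + f(a)| = O(1)$ is in hand, the negative case is a routine triangle-inequality computation, so the only genuine work is the telescoping step and the symmetry argument linking $f(-a)$ to $-f(a)$.
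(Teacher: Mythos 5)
Your proof is correct and takes essentially the same route as the paper: the telescoping sum for $n \ge 1$ is just the paper's induction unrolled, and you handle the negative case by the symmetry bound $|f(a)+f(-a)| < 2C$, which is exactly the step the paper waves at with ``a similar argument.'' The factor $3$ you lose for negative $n$ (and the degenerate $n=0$ case, where $|n|C = 0$) is harmless since the lemma is only ever invoked up to a constant depending on $f$; in fact the stated bound with the very same $C$ cannot hold for negative $n$ in general (take $f(0)=K$, $f\equiv 0$ elsewhere, $C=K+1$, $n=-1$, $a=0$), so your explicit constant bookkeeping is, if anything, more careful than the original.
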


\begin{proof}
    For $n$ positive, we induct on $n$. For the base case of $n=1$, $|f(a)-f(a)| = 0$ which is clearly less than $C$. We inductively assume that $|f(na) - nf(a)| < nC$ for some $n \in \mathbb{N}$. Then, by the definition of a near-endomorphism, we know that
    \begin{align*}
        |f(na+a) - f(na) - f(a)| &< C \\
        |f((n+1)a) - f(na) - f(a)| &< C \\
        |f((n+1)a) - (n+1)f(a)| &< (n+1)C
    \end{align*}
    by triangle inequality. Hence, the inductive hypothesis holds for the $n+1$ case. Using a similar argument for negative integers $n$, we can show that $|f(na) - nf(a)| < |n|C$.
\end{proof}

\begin{corollary}
\label{mn bound}
    For all $m, n \in \mathbb{Z}$, $|mf(n) - nf(m)| < (|m| + |n|)C$.
\end{corollary}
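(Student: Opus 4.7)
The plan is to derive the corollary as a direct consequence of Lemma \ref{linear bound} by applying it twice and combining the two inequalities via the triangle inequality.

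First, I would apply Lemma \ref{linear bound} with $a = m$ (and role of $n$ unchanged) to obtain $|f(nm) - nf(m)| < |n|C$. Then I would apply Lemma \ref{linear bound} a second time, this time with $a = n$ and the roles of $m,n$ swapped, to obtain $|f(mn) - mf(n)| < |m|C$. The key observation is that $f(nm)$ and $f(mn)$ are literally the same integer because multiplication in $\mathbb{Z}$ is commutative, so both inequalities bound the distance from the common value $f(mn)$ to $nf(m)$ and $mf(n)$ respectively.

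Next, I would chain the two bounds. Writing
\[
mf(n) - nf(m) = \bigl(mf(n) - f(mn)\bigr) + \bigl(f(mn) - nf(m)\bigr),
\]
the triangle inequality immediately gives
\[
|mf(n) - nf(m)| \le |f(mn) - mf(n)| + |f(nm) - nf(m)| < |m|C + |n|C = (|m|+|n|)C,
\]
which is exactly the desired bound.

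There is really no main obstacle here: the argument is purely a two-line algebraic manipulation once the lemma is in hand. The only mild subtlety worth flagging is the use of commutativity of multiplication in $\mathbb{Z}$ to identify $f(nm)$ with $f(mn)$ — without that, one would be comparing values of $f$ at two different inputs and would need an extra appeal to the near-endomorphism property. Since we are over $\mathbb{Z}$, this identification is free.
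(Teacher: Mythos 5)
Your proposal is correct and matches the paper's proof essentially verbatim: both apply Lemma \ref{linear bound} twice (once for each of $|f(mn)-mf(n)|<|m|C$ and $|f(mn)-nf(m)|<|n|C$) and combine the two bounds with the triangle inequality. The remark about commutativity of multiplication identifying $f(nm)$ with $f(mn)$ is a fine, if minor, clarification.
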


\begin{proof}
    By Lemma \ref{linear bound}, we have that 
    \begin{align*}
        |f(mn) - mf(n)| &< |m|C \\
        |f(mn) - nf(m)| &< |n|C
    \end{align*}
    Then adding the two inequalities and using Triangle Inequality gives \[ |mf(n) - nf(m)| < (|m| + |n|)C\] as desired.
\end{proof}

\begin{theorem}
    The ring of near-endomorphisms $\mathbb{E}$ is isomorphic to $\mathbb{R}$.
\end{theorem}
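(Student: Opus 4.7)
The plan is to construct an explicit ring isomorphism $\phi : \mathbb{E} \to \mathbb{R}$. Given a representative near-endomorphism $f$ with defect constant $C$, Corollary \ref{mn bound} yields $|f(n)/n - f(m)/m| < (1/m + 1/n)C$ for positive $m, n$, so the sequence $(f(n)/n)_{n \geq 1}$ is Cauchy in $\mathbb{Q}$ and converges to some $\alpha(f) \in \mathbb{R}$. I would define $\phi([f]) = \alpha(f)$ and observe that this is well-defined on equivalence classes: if $f \sim g$ then $f - g$ is bounded, forcing $(f(n) - g(n))/n \to 0$.

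Before addressing the homomorphism properties, I would derive the sharper estimate $|f(n) - n\alpha(f)| \leq C$ for every $n \in \mathbb{Z}$, obtained by fixing $n$ in Corollary \ref{mn bound}, dividing by $m$, and letting $m \to \infty$. This inequality does essentially all the remaining work. For injectivity, $\alpha(f) = 0$ immediately gives $|f(n)| \leq C$, so $f$ is bounded and $[f] = 0$. For surjectivity, the floor function $f_\alpha(n) = \lfloor n\alpha \rfloor$ is a near-endomorphism with defect at most $2$, and clearly $\alpha(f_\alpha) = \alpha$, so every real is in the image.

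Next I would verify that $\mathbb{E}$ carries a ring structure with pointwise addition and composition as multiplication, and that $\phi$ respects both. Additivity of $\phi$ is immediate from limit laws. For multiplicativity, combining the sharp estimates $|f(g(n)) - g(n)\alpha(f)| \leq C_f$ and $|g(n) - n\alpha(g)| \leq C_g$ yields $|f(g(n)) - n\alpha(f)\alpha(g)| \leq C_f + |\alpha(f)|C_g$, so dividing by $n$ and letting $n \to \infty$ gives $\phi([f \circ g]) = \alpha(f)\alpha(g)$.

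The main obstacle I anticipate is the preliminary verification that composition descends to $\mathbb{E}$, i.e.\ that $f_1 \sim f_2$ and $g_1 \sim g_2$ imply $f_1 \circ g_1 \sim f_2 \circ g_2$. The half $f_1 \circ g_2 \sim f_2 \circ g_2$ is immediate, but showing $f_1 \circ g_1 \sim f_1 \circ g_2$ is subtle: the difference $k_n := g_1(n) - g_2(n)$ is bounded but varies with $n$, so boundedness of $f_1(g_1(n)) - f_1(g_2(n))$ does not follow from any naive linearity argument. The fix is to apply the near-endomorphism defect of $f_1$ to the decomposition $g_1(n) = g_2(n) + k_n$, yielding $|f_1(g_1(n)) - f_1(g_2(n)) - f_1(k_n)| < C_{f_1}$, and then to observe that $\{k_n : n \in \mathbb{Z}\}$ is a bounded, hence finite, subset of $\mathbb{Z}$, so $f_1(k_n)$ takes only finitely many values and stays bounded.
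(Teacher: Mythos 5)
Your proposal follows the same overall route as the paper: define $\lambda(f)=\lim_{n\to\infty} f(n)/n$ via the Cauchy-sequence consequence of Corollary \ref{mn bound}, identify the kernel with the bounded functions through the estimate $|f(n)-n\lambda(f)|\le C$, and get surjectivity from $n\mapsto\lfloor \alpha n\rfloor$. Where you genuinely diverge is the multiplicative step: the paper splits into three cases ($g$ bounded, $\lambda(g)>0$, $\lambda(g)<0$) and manipulates limits of $f(g(n))/g(n)\cdot g(n)/n$, which requires separate arguments about $g(n)\to\pm\infty$ and about $f(-n)/n\to-\lambda(f)$; your argument instead uses the sharp estimate on all of $\mathbb{Z}$ to get the uniform bound $|f(g(n))-n\alpha(f)\alpha(g)|\le C_f+|\alpha(f)|C_g$ in one stroke, with no case analysis, which is cleaner and avoids the delicate points in the paper's $\lambda(g)<0$ case. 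You also explicitly verify that composition descends to equivalence classes (via the decomposition $g_1(n)=g_2(n)+k_n$ with $k_n$ ranging over a finite set), a point the paper only asserts informally in Section \ref{construct}; strictly speaking this also follows from additivity, multiplicativity on representatives, and the kernel characterization, but your direct argument is correct and makes the ring structure on $\mathbb{E}$ airtight. I see no gaps in your outline.
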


\begin{proof}

Let $N$ be the set of all near-endomorphisms and let $f \in N$. From these results, we can consider $f$ over $\mathbb{N}$ and find that for any $n, m \in \mathbb{N}$, we have that $|mf(n) - nf(m)| < (|m| + |n|)C$. Dividing by $mn$ on both sides, we observe
\[ \left| \frac{f(n)}{n} - \frac{f(m)}{m} \right| < \left( \frac{1}{n} + \frac{1}{m} \right)C\]

Hence, the sequence defined by $\{\frac{f(n)}{n}\}$ for all $n \in \mathbb{N}$ must be a Cauchy sequence by the above inequality. By the property of Cauchy sequences, it must converge to some real number; let \[\lambda (f) := \lim_{n \to \infty} \frac{f(n)}{n}.\]

We claim that $\lambda$ is an isomorphism of additive groups, and then show the multiplicative property of $\lambda$ to conclude that $\mathbb{E} \cong \mathbb{R}$.

We verify that $\lambda$ is indeed additive: for near-endomorphisms $f, g$, \[\lambda(f+g) = \lim_{n \to \infty} \frac{f(n)}{n} + \frac{g(n)}{n} = \lambda(f) + \lambda(g)\] by the additive properties of limits. Additionally, we observe that the function $f_a (x) = \lfloor ax \rfloor$ satisfies $\lambda(f_a) = a$ for any $a \in \mathbb{R}$, so $\lambda$ is also surjective. 

Next, we show that the kernel of $\lambda$ is the subgroup $B$ of bounded functions. To do this, consider that for $n \in \mathbb{N}$, we have
\begin{align*}
    \lim_{m \to \infty} \left| \frac{f(n)}{n} - \frac{f(m)}{m} \right| &\leq \lim_{m \to \infty} \left(\frac{1}{n} + \frac{1}{m} \right)C \\
    \left|\frac{f(n)}{n} - \lambda(f) \right| &\leq \frac{C}{n} \\
    | f(n) - n\lambda(f) | &\leq C
\end{align*}

With $\lambda(f) = 0$, we observe that $|f(n)| \leq C$, so the values of $f(n)$ are bounded for all $n \in \mathbb{N}$. Additionally, since $f$ is a near-endomorphism, we have that 
\begin{align*}
    | f(0) - f(n) - f(-n) | &< C \\
    |f(-n)| &< C + |f(0)| + |f(n)| \\
    |f(-n)| &< 2C + |f(0)|
\end{align*}
Since $f(0)$ has a finite value, $f(-n)$ must be bounded for all $n \in \mathbb{N}$. As such, $f(n)$ is bounded for all $n \in \mathbb{Z}$; thus, any near-endomorphism mapped to $0$ by $\lambda$ is a bounded function. Additionally, any bounded function $f$ satisfies $\lim_{n \to \infty} \frac{f(n)}{n} = 0$, so all bounded functions are mapped to $0$ by $\lambda$, so $\ker \lambda$ is precisely all bounded near-endomorphisms, as claimed.

Hence, we have that $\lambda$ is an isomorphism of $\lambda: N / B \to \mathbb{R}$. Since $\mathbb{E}$ is precisely $N / B$, we have that $\mathbb{E} \cong \mathbb{R}$ as additive groups.

We finally verify that $\lambda$ is multiplicative under the composition of functions. For $f, g \in N$, we consider three cases: $g$ is bounded, $\lambda(g) > 0$, or $\lambda(g) < 0$. If $g$ is bounded, then $f \circ g$ has finitely many values in its domain and therefore we know that $g, f \circ g \in B$. Thus, $\lambda (f \circ g) = \lambda(f) \lambda(g) = 0$ as we desired.

Thus, we consider the case where $g \in B$ and thus $\lambda(g) \neq 0$. If $\lambda(g) > 0$, then we have 
\begin{align*}
    |g(n) - n\lambda(g)| &\leq C \\
    \lim_{n \to \infty} |g(n) - n\lambda(g)| &\leq C \\
    \lim_{n \to \infty} g(n) &= +\infty
\end{align*}
Hence, we can find that 
\begin{align*}
    \lim_{n \to \infty} \frac{f(g(n))}{g(n)} &= \lim_{n \to \infty} \frac{f(n)}{n} = \lambda(f) \\
    \lambda (f \circ g) &= \lim_{n \to \infty} \frac{f(g(n))}{g(n)} \cdot \frac{g(n)}{n} = \lambda(f) \cdot \lambda(g)
\end{align*}
as we desired. On the other hand, if $\lambda(g) < 0$, then we observe that $\lim_{n \to \infty} g(n) = -\infty$ and that $|f(-n) + g(n)| < |g(0)| + C$. As such, for $n \in \mathbb{N}$
\begin{align*}
    \left|\frac{f(-n)}{n} + \frac{f(n)}{n} \right| &< \frac{|f(0)| + C}{n} \\
    \left| \lim_{n \to \infty} \frac{f(-n)}{n} + \lim_{n \to \infty} \frac{f(n)}{n} \right| &\leq 0 \\
    \lim_{n \to \infty} \frac{f(-n)}{n} &= -\lambda(f)
\end{align*}
This implies
\[\lambda (f \circ g) = \lim_{n \to \infty} \frac{f(g(n))}{g(n)} \cdot \frac{g(n)}{n} = \lim_{n \to \infty} -1 \cdot \frac{f(-n)}{n} \cdot \frac{g(n)}{n} = \lambda(f) \cdot \lambda(g)\]
which shows that $\lambda$ is multiplicative. As such, $\mathbb{E} \cong \mathbb{R}$.
\end{proof}

\section{Constructing the Eudoxus Reals}
\label{construct}

We seek to construct $\mathbb{R}$ using the set of near-endomorphisms of $\mathbb{Z}$ under the operation of pointwise addition and composition as the analogues of addition and multiplication in the reals, respectively. One can easily verify that $\mathbb{E}$ is indeed closed and consistent under these operations.

Under the intuition that the slope of a function corresponds to a real number, we can find that the equivalence class of $\mathbb{E}$ that contains all finite functions corresponds to the additive identity $0$ in $\mathbb{R}$ and the equivalence class that contains the functions sending $f: x \to x$ is the multiplicative identity 1.

Additionally, the associativity of both operations is clearly satisfied. However, while pointwise addition is indeed commutative, one must prove that composition is commutative in $\mathbb{E}$.

\begin{lemma}[Commutativity]
    For near-endomorphisms $f, g$, $f \circ g \sim g \circ f$.
\end{lemma}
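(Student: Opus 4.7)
My plan is to show that $f \circ g - g \circ f$ is bounded by combining two applications of Corollary \ref{mn bound}. The key observation is that if we apply Corollary \ref{mn bound} to $f$ with arguments $n$ and $g(n)$, and to $g$ with arguments $n$ and $f(n)$, then a triangle-inequality combination causes the cross terms $g(n)f(n)$ and $f(n)g(n)$ to cancel, leaving a bound on $n\bigl[f(g(n)) - g(f(n))\bigr]$.

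Concretely, let $C_f$ and $C_g$ be the near-endomorphism constants for $f$ and $g$. By Corollary \ref{mn bound} applied to $f$,
\[
|n f(g(n)) - g(n) f(n)| < (|n| + |g(n)|) C_f,
\]
and applied to $g$,
\[
|n g(f(n)) - f(n) g(n)| < (|n| + |f(n)|) C_g.
\]
Adding these via the triangle inequality, the $f(n)g(n)$ and $g(n)f(n)$ terms cancel, yielding
\[
|n|\cdot |f(g(n)) - g(f(n))| < (|n| + |g(n)|) C_f + (|n| + |f(n)|) C_g.
\]

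To finish, I need the right-hand side to grow at most linearly in $|n|$. This is exactly what Lemma \ref{linear bound} provides: taking $a=1$ gives $|f(n)| \leq |n|(|f(1)| + C_f)$ and $|g(n)| \leq |n|(|g(1)| + C_g)$. Substituting and dividing by $|n|$ (for $n \neq 0$) produces a uniform bound on $|f(g(n)) - g(f(n))|$ depending only on $f(1), g(1), C_f, C_g$; the single value $n = 0$ is handled trivially since $|f(g(0)) - g(f(0))|$ is a fixed integer.

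I do not expect a serious obstacle: everything is an essentially mechanical combination of the two previously proven results. The only point requiring a bit of care is ensuring that the cross-term cancellation is set up correctly so that the factor of $n$ (rather than $g(n)$ or $f(n)$) appears on the left after combining the two inequalities. Once that is arranged, the linear bound on $|f(n)|$ and $|g(n)|$ from Lemma \ref{linear bound} is exactly the right tool to absorb the $|f(n)|$ and $|g(n)|$ on the right-hand side back into a constant multiple of $|n|$.
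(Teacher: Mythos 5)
Your proposal is correct and follows essentially the same route as the paper: apply Corollary \ref{mn bound} once with $m = g(n)$ for $f$ and once with $m = f(n)$ for $g$, combine by the triangle inequality so the cross terms cancel, and absorb $|f(n)|$, $|g(n)|$ via Lemma \ref{linear bound} with $a = 1$ to get a bound independent of $n$. Your version is, if anything, slightly cleaner in tracking $C_f$ and $C_g$ separately and in handling $n = 0$ explicitly.
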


\begin{proof}
    By definition of a near-endomorphism, let there exists $C_f, C_g$ such that
    \begin{align*}
        |f(a+b) - f(a) - f(b)| &< C_f \\
        |g(a+b) - g(a) - g(b)| &< C_g
    \end{align*}
    For the sake of simplicity, let $C = \max(C_f, C_g)$.
    
    By Corollary \ref{mn bound}, for any $n$, we can take $m = g(n)$ and $m = f(n)$ respectively to find that
    \begin{align*}
        |nf(g(n)) - g(n)f(n)| &< (|n| + |g(n)|)C \\
        |ng(f(n)) - g(n)f(n)| &< (|n| + |f(n)|)C
    \end{align*}
    and therefore, by triangle inequality, 
    \[|n(f(g(n)) + g(f(n)))| < (2|n| + |f(n)| + |g(n)|)C\]

    We can bound the values of $|f(n)|$ and $|g(n)|$ using Lemma \ref{linear bound} where $a = 1$, finding
    \begin{align*}
        |f(n)| &< |nf(1)| + |n|C \\
        |g(n)| &< |ng(1)| + |n|C
    \end{align*}
    
    Hence, we observe that
    \begin{align*}
        |n(f(g(n)) + g(f(n)))| &< (2|n| + |nf(1)| + |ng(1)| + 2|n|C)C \\
        |f(g(n)) - g(f(n))| &< (2 + |f(1)| + |g(1)| + 2C)C
    \end{align*}

    Since $f(1), g(1), C$ each have bounded values, we find that $f \circ g - g \circ f$ is finite. By definition, $f \circ g \sim g \circ f$.
\end{proof}

The commutativity of composition allows for one to prove the distributive law, and therefore show that $\mathbb{E}$ is a ring.

Furthermore, $\mathbb{E}$ is an ordered ring. Intuitively, we want to define the positive elements to be the ones with positive slope. 
\begin{defn}
    A near-endomorphism $f$ is positive if for any $C > 0$, there exists some $N$ where $f(n) > C$ for all $n > N$.
\end{defn}

\begin{remark}
    An equivalent definition is that a near-endomorphism $f$ is positive if $\{ f(n) : n \in N \}$ has infinitely many positive values.
\end{remark}

It is straightforward to verify that the positive near-endomorphisms are closed under addition and multiplication. 

\begin{lemma}[Trichotomy]\cite{arthan} For any near-endomorphism $f$, exactly one of the following is true: $f$ is bounded, $f$ is positive, or $-f$ is positive.
\end{lemma}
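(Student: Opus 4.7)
The plan is to split the trichotomy into mutual exclusivity (at most one condition holds) and exhaustivity (at least one holds). For mutual exclusivity: if $f$ is positive, then $f(n)$ can be made larger than any threshold as $n$ grows, ruling out $f$ being bounded; and $f(n)>1$ eventually forces $-f(n)<-1$ eventually, so $-f$ is not positive. The other two exclusions follow by symmetry.

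For exhaustivity, I would reduce to an elementary trichotomy for Cauchy sequences in $\mathbb{Q}$. Applying Corollary~\ref{mn bound} with positive integers $m,n$ and dividing by $mn$ gives $\left|\frac{f(n)}{n} - \frac{f(m)}{m}\right| < \left(\frac{1}{n} + \frac{1}{m}\right)C$, so the rational sequence $r_n := f(n)/n$ is Cauchy in $\mathbb{Q}$. I would then invoke the standard sign trichotomy for $\mathbb{Q}$-valued Cauchy sequences: either $|r_n|$ is eventually less than every positive rational, or there exists a positive rational $\delta$ with $r_n>\delta$ eventually, or with $r_n<-\delta$ eventually. This is a short sign-pinning argument---if $|r_{n_k}|\geq \epsilon_0$ for a subsequence, pick $N$ with $|r_n - r_m|<\epsilon_0/2$ for $n,m\geq N$; then for all $n\geq N$ the sign of $r_n$ matches that of any single $r_{n_{k_0}}$ with $|r_{n_{k_0}}|\geq \epsilon_0$, and $|r_n|>\epsilon_0/2$.

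The two ``definite sign'' cases close quickly: $r_n>\delta$ eventually implies $f(n)>\delta n$ eventually, so for any $C$ we have $f(n)>C$ once $n>C/\delta$, i.e., $f$ is positive; symmetrically $r_n<-\delta$ eventually yields $-f$ positive. In the vanishing-slope case, I would show $f$ is bounded: fix $n\geq 1$ and an arbitrary rational $\epsilon>0$, choose $m$ large so that $|r_m|<\epsilon$ and $C/m<\epsilon$; then $|r_n|<|r_m|+(1/n+1/m)C<C/n + 2\epsilon$, and since $\epsilon$ is arbitrary this forces $|r_n|\leq C/n$, i.e., $|f(n)|\leq C$. The defining inequality $|f(0)-f(n)-f(-n)|<C$ then extends boundedness to negative integers, as in the isomorphism proof.

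The main obstacle is the vanishing-slope case: deducing that $f$ itself is bounded on $\mathbb{Z}$ from the fact that $r_n$ tends to $0$ in the $\mathbb{Q}$-sense, without invoking any real-valued limit or supremum. This is a pure $\epsilon$-manipulation in $\mathbb{Q}$ that replaces the one-line ``send $m\to\infty$'' used in Section~\ref{iso}, where the reals were freely available. Since the goal of this section is to build $\mathbb{E}$ from scratch, the argument must not presuppose $\mathbb{R}$; working with the Cauchy inequality directly over $\mathbb{Q}$ accomplishes exactly this.
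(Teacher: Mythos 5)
Your argument is essentially correct, but note that the paper itself offers no proof of this lemma: it is imported from \cite{arthan}, so there is no internal argument to compare against, and a self-contained proof like yours is genuinely additive here. Arthan's own treatment works directly with the growth of $f$ on $\mathbb{N}$ (showing an unbounded near-endomorphism must eventually be large of a single sign), whereas you route everything through the rational sequence $r_n := f(n)/n$: Corollary~\ref{mn bound} gives the Cauchy estimate, the elementary sign trichotomy for $\mathbb{Q}$-valued Cauchy sequences splits into a null case and two definite-sign cases, and the null case is closed by your $\epsilon$-argument yielding $|f(n)|\le C$ for $n\ge 1$, with negative arguments handled via $|f(0)-f(n)-f(-n)|<C$ exactly as in Section~\ref{iso}. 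This is a valid alternative, and it honors the constraint of Section~\ref{construct} --- no appeal to $\mathbb{R}$, real limits, or suprema --- since every limit statement is replaced by a rational $\epsilon$-statement; it also has the virtue of recycling Lemma~\ref{linear bound} and Corollary~\ref{mn bound} instead of redoing growth estimates from scratch. Two small points to tighten: in the definite-sign case the conclusion ``$f(n)>C$ once $n>C/\delta$'' should read $n>\max(N_0, C/\delta)$, where $N_0$ is the index past which $r_n>\delta$; and mutual exclusivity has three pairs, so state explicitly that ``bounded versus $-f$ positive'' follows by the same symmetry as ``bounded versus $f$ positive.''
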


With trichotomy, $\mathbb{E}$ becomes an ordered ring. To prove that $\mathbb{E}$ is a field, we show that for any near-endomorphism $f$, there exists near-endomorphism $g$ such that $f \circ g$ is finite.

\begin{lemma}
\label{neg}
    If $f$ satisfies the properties that $f(x) = -f(-x)$ and $f(a+b) - f(a) - f(b)$ takes on finitely many values for $a, b \in \mathbb{N}$, then $f$ is a near-endomorphism.
\end{lemma}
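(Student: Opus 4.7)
The plan is to exploit the two hypotheses in sequence: the finiteness condition on $\mathbb{N}$ supplies a bound $C$ with $|f(a+b)-f(a)-f(b)|<C$ for all $a,b\in\mathbb{N}$, and the oddness hypothesis $f(x)=-f(-x)$ (which in particular forces $f(0)=0$) lets me rewrite any integer identity involving $f$ as an equivalent identity on nonnegative arguments. The goal then is to show that for \emph{all} $a,b\in\mathbb{Z}$ the same constant $C$ (or at worst $C$ plus a fixed constant from the $a=0$ or $b=0$ edge cases) bounds $|f(a+b)-f(a)-f(b)|$.

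First I would handle the trivial edge cases: if $a=0$ or $b=0$, the expression equals $0$ since $f(0)=0$. So I may assume $a,b$ are both nonzero. Next I split into three regimes based on the signs of $a$ and $b$.

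If both $a,b$ are negative, set $a'=-a,b'=-b\in\mathbb{N}$. Then by oddness $f(a+b)-f(a)-f(b)=-\bigl(f(a'+b')-f(a')-f(b')\bigr)$, which is bounded in absolute value by $C$. The remaining case is mixed signs, which I would treat by symmetry: assume $a>0$ and $b<0$, and split on whether the sum $c:=a+b$ is nonnegative or negative. If $c\ge0$, then $a=c+(-b)$ is an equation among nonnegative integers, so applying the $\mathbb{N}$-bound gives $|f(a)-f(c)-f(-b)|<C$; substituting $f(-b)=-f(b)$ rearranges this to $|f(a+b)-f(a)-f(b)|<C$. If instead $c<0$, then $-b=a+(-c)$ is an equation among nonnegative integers, and a parallel substitution using oddness on both $f(-b)$ and $f(-c)$ yields the same bound.

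The main obstacle is simply keeping the sign bookkeeping straight; there is no deep estimate involved, because every case is engineered to reduce, via a single application of oddness, to an instance of the near-endomorphism inequality already guaranteed on $\mathbb{N}$. The payoff is that once trichotomy of signs has been exhausted, $|f(a+b)-f(a)-f(b)|<C$ holds on all of $\mathbb{Z}\times\mathbb{Z}$, so $f$ qualifies as a near-endomorphism by definition.
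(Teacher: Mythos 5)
Your proposal is correct and follows essentially the same route as the paper: split into the both-negative and mixed-sign cases, and use the oddness $f(x)=-f(-x)$ to rewrite each instance as an application of the bound already guaranteed on $\mathbb{N}$. The only cosmetic difference is that in the mixed case with $a+b>0$ you apply the $\mathbb{N}$-inequality directly to the identity $-b=a+(-c)$, whereas the paper chains the reduction through its previously handled mixed subcase; both amount to the same bookkeeping.
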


\begin{proof}
    Since $f$ satisfies the definition of a near-endomorphism for when $a, b \in \mathbb{N}$, we consider two cases: both $a$ and $b$ are negative, or only one of the two is negative.

    In the former case, we observe that $f(a+b) - f(a) - f(b) = -(f(-a-b) - f(-a) - f(-b)$, where $-a, -b \in \mathbb{N}$. Hence, when $a,b$ are both negative, then $f(a+b) - f(a) - f(b)$ can only take on the negative values of $f(a+b) - f(a) - f(b)$ where $a, b \in \mathbb{N}$.

    In the latter case, without loss of generality, let $a$ be negative and $b$ nonnegative. If $a + b \leq 0$, then let $n = b$ and $m = -a - b$. Then, we notice that $m, n$ are both positive. Hence, $f(m+n) - f(m) - f(n) = f(-a) - f(-a-b) - f(b) = f(a+b) - f(a) - f(b)$ reduces this case to when $a, b$ are both positive. Similarly, if $a + b > 0$, then we choose $m = -b$ and $n = a+b$, where $m$ is negative, $n$ is positive, and $m + n < 0$. This gives $f(m+n) - f(m) - f(n) = f(a) - f(-b) - f(a+b) = -(f(a+b) - f(a) - f(b))$. We know $f(m+n) -f(m) - f(n)$ is bounded as this is the case where $a$ is negative, $b$ is positive, and $a + b < 0$; thus, we are done.
\end{proof}

\begin{lemma}[Multiplicative Inverse]
    For every unbounded near-endomorphism $f$, there exists an inverse near-endomorphism $g$ such that $f \circ g$ is a finite function.
\end{lemma}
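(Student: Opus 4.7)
The plan is to construct an approximate inverse by inverting $f$ at the level of values. By Trichotomy, either $f$ or $-f$ is positive; after replacing $f$ with $-f$ and pre-composing the eventual inverse with negation at the very end, I may assume $f$ itself is positive. Define
\[g(n) \;=\; \min\bigl\{\, m \in \mathbb{N} : f(m) \geq n \,\bigr\}\]
for $n > 0$, set $g(0) = 0$, and extend oddly by $g(-n) = -g(n)$. Positivity of $f$ guarantees the minimum exists for every $n > 0$.

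The first step is to bound $f(g(n)) - n$ on $\mathbb{N}$. Minimality gives $f(g(n) - 1) < n \leq f(g(n))$, while the near-endomorphism relation applied to the decomposition $g(n) = (g(n) - 1) + 1$ forces $f(g(n)) < f(g(n) - 1) + f(1) + C$. Chaining these inequalities yields $0 \leq f(g(n)) - n < f(1) + C$. Combined with the identity $|f(m) + f(-m) - f(0)| < C$ and the odd extension of $g$, this propagates to all of $\mathbb{Z}$, which is exactly the statement that $f \circ g$ differs from the identity function by a bounded amount.

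The technical heart is showing that $g$ is itself a near-endomorphism. By oddness of $g$ and Lemma \ref{neg}, it suffices to bound $g(a + b) - g(a) - g(b)$ for $a, b \in \mathbb{N}$. Setting $x = g(a + b)$ and $y = g(a) + g(b)$, the near-endomorphism property of $f$ applied at $y$, together with the previous step, gives $|f(x) - f(y)| \leq K$ for a constant $K$ depending only on $f$. Taking WLOG $x \geq y$ (both lie in $\mathbb{N}$), the decomposition $f(x) = f((x - y) + y) = f(x - y) + f(y) + O(1)$ forces $|f(x - y)|$ to be bounded.

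The main obstacle, and the essential place where positivity is used, is converting this bound on $f$-values back into a bound on the argument $x - y \in \mathbb{N}$. Positivity of $f$ says exactly that for any threshold $T$, all but finitely many $m \in \mathbb{N}$ satisfy $f(m) > T$; hence the set of $m \in \mathbb{N}$ on which $|f(m)|$ is bounded is finite, pinning down $|x - y|$ and therefore $|g(a + b) - g(a) - g(b)|$. This completes the verification that $g$ is a near-endomorphism, and together with the first step it shows $g$ is the desired inverse in $\mathbb{E}$.
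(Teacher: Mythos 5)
Your proposal is correct and follows essentially the same route as the paper: the same construction $g(n) = \min\{m \in \mathbb{N} : f(m) \geq n\}$ with odd extension, the same use of positivity to convert a bound on $f(g(a+b) - g(a) - g(b))$ into a bound on the argument itself (via Lemma \ref{neg}), and the same reduction of the negative case by negation. Your bookkeeping via $|f(x) - f(y)| \leq K$ with $x = g(a+b)$, $y = g(a)+g(b)$ is a slightly cleaner presentation of the paper's chain of inequalities, but not a different argument.
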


\begin{proof}
    We wish to be able to construct $g$ in the same way that inverse functions are typically constructed with flipping functions over the line $y=x$, mending the fact that $f$ is not necessarily a one-to-one function. Suppose that $f$ is positive. To do so, for any $x \mathbb{N}$, we choose $g(x) = y \in \mathbb{N}$ as the smallest value satisfying $f(y) \geq x$. This will always be well-defined as $f$ is positive.
    
    Hence, we define the function $g$ in the following way: 
    \[ g(x) = \begin{cases}
        \min(y \in \mathbb{N}, f(y) \geq x) & \text{if } x \geq 0 \\
        -g(-x) & \text{if } x < 0
    \end{cases} \]

    We first verify that $g$ is, in fact, a near-endomorphism. That is, there exists $C$ such that
    \[|g(a+b) - g(a) - g(b)| < C\]

    Consider $a, b \in \mathbb{N}$. Then, $g(a+b), g(a), g(b) \geq 0$. Due to how $g$ was constructed, we also observe that 
    \begin{align*}
        f(g(a)) &\geq  a > f(g(a) - 1) \\
        f(g(b)) &\geq b > f(g(b) - 1) \\
        f(g(a+b)) &\geq a+b > f(g(a+b)-1)
    \end{align*}

    Then, we find that 
    \begin{align*}
        f(g(a+b)) - f(g(a) - 1) - f(g(b) - 1) &> (a+b) - a - b = 0 \\
        f(g(a+b) - 1) - f(g(a)) - f(g(b)) &< (a+b) - a - b = 0
    \end{align*}

    We note that the difference between $f(g(a+b) - g(a) - g(b))$ and the left hand side of both of the above inequalities is bounded. We can find this due to the fact that
    \begin{align*}
        |f(g(a+b)) - f(g(a+b) - g(a) - g(b)) - f(g(a) + g(b))| &< C \\
        |f(g(a)+g(b)) - f(g(a)) - f(g(b))| &< C
    \end{align*}

    and that $|f(x+1) - f(x)| < C + |f(1)|$, which allows us to replace $f(g(a))$ with $f(g(a)-1)$ (and similarly for $g(b)$ and $g(a+b)$) in the inequalities while maintaining that the expression is bounded. Hence, we find that $f(g(a+b) - g(a) - g(b))$ must be bounded; otherwise, since the left-hand side of the inequalities are positive and negative, respectively, the difference between $f(g(a+b) - g(a) - g(b))$ would not be bounded.

    Then, since $f$ is a positive function, then it is impossible for $g(a+b) - g(a) - g(b)$ to take on infinitely many values. If the expression is unbounded above, then $f$ being positive implies that $f(g(a+b) - g(a) - g(b))$ is unbounded. If the expression is unbounded below, then we can come to the same conclusion as $f(g(a+b) - g(a) - g(b)) + f(g(a) + g(b) - g(a+b))$ is bounded. Hence, $g(a+b) - g(a) - g(b)$ takes on finitely many values for $a, b \in \mathbb{N}$. By Lemma \ref{neg}, $g$ is a near-endomorphism.
    
    Then, for sufficiently large $x$, we have that 
    \begin{align*}
        f(g(x)) &\geq x \geq f(g(x) - 1) \geq f(g(x)) - C - |f(1)| \\
        f \circ g - x &\geq 0 \geq f \circ g - C - |f(1)|
    \end{align*}

    so $f \circ g - x$ is a bounded function, and thus $f \circ g \sim x$. Since $x$ can be seen to be the identity function, $g$ is the multiplicative inverse of $f$.

    For negative near-endomorphisms $f$, we can construct the multiplicative inverse $g$ to $f \circ h$, where $h(x) = -x$. Then, $(f \circ h) \circ g = f \circ (h \circ g) = 1$, so $h \circ g$ is the multiplicative inverse of $f$.
\end{proof}

Now that we know inverses exist, we find that $\mathbb{E}$ is an ordered field.

\begin{lemma}[Completeness]
    \cite{arthan}
    $\mathbb{E}$ is complete. For any non-empty, bounded above subset $S \subset \mathbb{E}$, there exists some $s \in \mathbb{E}$ such that for any $x \in S$, the following is true: (1) $x \le s$ and (2) for any $y \in \mathbb{E}$, if $x \le y$ for all $x \in S$, then $s \le y$.
\end{lemma}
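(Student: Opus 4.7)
The plan is to construct the supremum of $S$ explicitly as an equivalence class of near-endomorphisms, without appealing to the real numbers. The core idea is a pointwise supremum over $S$ of carefully chosen representatives.

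First, I would verify that $\mathbb{E}$ is Archimedean: Lemma \ref{linear bound} forces every near-endomorphism $f$ to satisfy $|f(n)| \le |n|(|f(1)|+C)$, so no element of $\mathbb{E}$ can exceed all integer multiples of the identity. This lets me embed $\mathbb{Q}$ into $\mathbb{E}$ via $p/q \mapsto [n \mapsto \lfloor pn/q \rfloor]$ and treat rationals concretely. Next, fix an upper bound $[h] \in \mathbb{E}$ of $S$. For each $[f] \in S$, the class $[h-f]$ is either bounded or positive, so $h - f$ is bounded below; subtracting this infimum from $f$ yields a representative (still in $[f]$) satisfying $f(n) \le h(n)$ for all $n$. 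With these representatives fixed, define
$$s(n) := \sup_{[f] \in S} f(n),$$
which is finite for each $n$ by the uniform bound $f(n) \le h(n)$.

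The heart of the argument would be showing that $s$ is a near-endomorphism: for any $a, b \in \mathbb{Z}$, one estimates $s(a+b) - s(a) - s(b)$ by combining, via suprema, the individual near-endomorphism inequalities and using an $\varepsilon$-approximation of each supremum. The subtlety is that the near-endomorphism constants $C_f$ vary with $[f]$, so to bound $|s(a+b) - s(a) - s(b)|$ uniformly I would refine the representative choice --- for instance, using the upper-envelope representative $\tilde{f}(n) := \sup_m(f(n+m) - f(m))$, which is canonically subadditive and equivalent to $f$. This reduces the problem to controlling the deviation from pure additivity uniformly across $S$, using only the intrinsic constants attached to the upper bound $[h]$.

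Finally, I would verify that $[s]$ is the supremum: it is an upper bound since $f(n) \le s(n)$ for each $[f] \in S$ under the chosen representatives, and for any other upper bound $[t]$ the analogous normalization gives $s(n) \le t(n) + K$ uniformly, hence $[s] \le [t]$. I expect the main obstacle is precisely controlling the near-endomorphism constants uniformly across $S$ when taking the pointwise supremum; this is the technical step the upper-envelope refinement is designed to address, and I would anticipate this is where the bulk of the careful bookkeeping lies (as presumably carried out in detail in \cite{arthan}).
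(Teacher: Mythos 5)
Your proposal has a genuine gap, and since the paper itself gives no argument for this lemma (it defers entirely to \cite{arthan}), your sketch has to stand on its own. There are two problems. First, a structural one: for $[f]$ strictly below the chosen bound $[h]$, no constant shift can make $f(n)\le h(n)$ for \emph{all} $n\in\mathbb{Z}$ --- the class $[h-f]$ being positive only bounds $h-f$ below on $\mathbb{N}$; as $n\to-\infty$ the difference tends to $-\infty$. Relatedly, a two-sided pointwise supremum cannot work: at $+\infty$ the sup behaves like the largest ``slope'' occurring in $S$ while at $-\infty$ it behaves like the smallest, so unless $S$ is a single class the sup over all of $\mathbb{Z}$ is never a near-endomorphism. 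The construction must be carried out on $\mathbb{N}$ and extended oddly, which is exactly what Lemma \ref{neg} is for, and your sketch never invokes it. Second, and more seriously: your construction is not well-defined at the level of $\mathbb{E}$. Both normalizations you specify --- the upper envelope $\tilde f(n)=\sup_m(f(n+m)-f(m))$ and the shift making $f\le h$ --- are applied to \emph{some} representative of each class, and neither removes the dependence on that choice nor yields constants uniform over an infinite $S$. The final step, ``the analogous normalization gives $s(n)\le t(n)+K$ uniformly,'' needs a single $K$ valid for infinitely many classes, and nothing in the proposal supplies it; the only uniform control you actually obtain is $s\le h$, which gives $[s]\le[h]$, not $[s]\le[t]$ for an arbitrary upper bound $[t]$.

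Here is a concrete failure of the scheme as specified. Let $u_k(n)=\lfloor -n/k\rfloor+\min(\max(n,0),\,2^k)$. Each $u_k$ differs from $\lfloor -n/k\rfloor$ by a bounded function, so $[u_k]$ has slope $-1/k$; the set $S=\{[u_k]:k\ge 1\}$ is nonempty, bounded above, and its least upper bound is $[0]$. Take $h(n)=n$ as your fixed upper bound. On $\mathbb{N}$ one checks $h-u_k\ge 0$ with infimum $0$, so your shift leaves $u_k$ unchanged, and the envelope only raises values, so it cannot help. But for $n\ge 1$, $\sup_k u_k(n)=n-1$ (take $k\ge n$), so the pointwise sup lies in the class of the identity --- a perfectly good near-endomorphism, but not $\le$ the upper bound $[0]$. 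So with data your scheme permits, it outputs (essentially) the arbitrarily chosen bound $[h]$ rather than $\sup S$; the ``careful bookkeeping'' you defer to is not bookkeeping but the missing idea. The repair is to make the choice class-intrinsic with an absolute constant: for instance, working on $\mathbb{N}$, set $s(n):=\sup_{[f]\in S} I(nf)$ using the integer part $I(\cdot)$ of Section \ref{uncountability}, which depends only on the class, satisfies $I(nf)\le I(nh)+1$ (finiteness), and has additivity defect in $\{0,1\}$ for every class; then monotonicity of $I$ and trichotomy give both that $[s]$ is an upper bound and that $[s]\le[t]$ for every upper bound $[t]$, after extending oddly via Lemma \ref{neg}. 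As written, your argument does not reach this point.
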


Hence, with these lemmas, we know that $\mathbb{E}$ is a complete ordered field. It is known that any two complete ordered fields are isomorphic to one another \cite{arthan}, so $\mathbb{E} \cong \mathbb{R}$.

\section{Uncountablility}
\label{uncountability}

Using the construction of $\mathbb{R}$ through the Eudoxus Reals, we can prove properties of the real numbers.

In particular, we consider a proof of the uncountability of $\mathbb{R}$ by adapting Cantor's Diagonal Argument to near-endomorphisms of $\mathbb{Z}$. To do so, we consider a correspondence between equivalence classes of near-endomorphisms and sequences of integers. This correspondence is analogous to the relationship between real numbers and continued fractions, and, indeed, we use the convergents of continued fractions in this proof.

\begin{lemma}
    For any near-endomorphism $f$, there is a unique integer $a$ such that $f = ax + g$, where $g$ is a near-endomorphism and $0 \leq g < x.$
\end{lemma}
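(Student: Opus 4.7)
The approach is to pull back the standard integer-part/fractional-part decomposition of $\mathbb{R}$ through the isomorphism $\lambda : \mathbb{E} \to \mathbb{R}$ built in Section \ref{iso}. Given a near-endomorphism $f$, I would set $a := \lfloor \lambda(f) \rfloor$, the unique integer with $a \leq \lambda(f) < a+1$, and then define the remainder pointwise by $g(n) := f(n) - an$. Because pointwise differences of near-endomorphisms are near-endomorphisms and $n \mapsto an$ is trivially a near-endomorphism with zero defect, $g$ is itself a near-endomorphism.

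Next I would verify the order constraints. Additivity of $\lambda$ yields $\lambda(g) = \lambda(f) - a \in [0, 1)$. If $\lambda(g) = 0$ then $g$ is bounded, by the estimate $|g(n) - n\lambda(g)| \leq C$ derived in the proof of the isomorphism theorem, so $g = 0$ in $\mathbb{E}$ and hence $g \geq 0$; if $\lambda(g) > 0$ then $g$ is positive by the definition given in Section \ref{construct}, so again $g > 0$. Applying the same reasoning to $x - g$, whose slope $1 - \lambda(g)$ lies in $(0, 1]$, shows $x - g$ is positive and therefore $g < x$ in $\mathbb{E}$. Uniqueness of $a$ then follows by applying $\lambda$ to any second decomposition $f = a'x + g'$ with $0 \leq g' < x$: the constraint forces $\lambda(g') \in [0, 1)$, so $a' = \lfloor \lambda(f) \rfloor = a$.

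I do not anticipate a genuine obstacle here: once the slope map $\lambda$ and the order on $\mathbb{E}$ are available, the lemma is essentially the assertion that every real number has a unique integer part. The only interpretive wrinkle is the meaning of the inequality $0 \leq g < x$. Read as an inequality of equivalence classes in $\mathbb{E}$, the argument above is clean. A literally pointwise reading would be awkward, since $g(n) = f(n) - an$ need not lie in $[0, n)$ for small or negative $n$; however, the same estimate $|g(n) - n\lambda(g)| \leq C$ recovers a pointwise version $0 \leq g(n) < n$ for all sufficiently large $n > 0$, namely once $n$ exceeds $\max\bigl(C/\lambda(g),\, C/(1 - \lambda(g))\bigr)$ in the case $\lambda(g) \in (0,1)$.
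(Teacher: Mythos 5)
Your argument is mathematically sound given what the paper has already established, but it takes a genuinely different route from the paper's own proof. You pull the decomposition back along the isomorphism $\lambda:\mathbb{E}\to\mathbb{R}$ of Section \ref{iso}: set $a=\lfloor\lambda(f)\rfloor$, let $g=f-ax$, and translate the inequalities $0\le g<x$ into statements about $\lambda(g)\in[0,1)$ via the estimate $|g(n)-n\lambda(g)|\le C$; uniqueness is then the uniqueness of the integer part of a real number. The paper instead argues intrinsically in $\mathbb{E}$, using only the order structure from Section \ref{construct}: it takes $a$ to be the largest integer $b$ with $bx\le f$, deduces $0\le f-ax<x$ from maximality and trichotomy, and proves uniqueness by showing any other $c$ violates one of the two inequalities. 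The trade-off is this: your proof is shorter and makes uniqueness transparent, but it imports the reals and their floor function, which runs against the stated purpose of Section \ref{uncountability} (and of the Background), namely to establish uncountability of $\mathbb{E}$ ``directly from the definition, without assuming the existence of the reals''; in that program the lemma must be proved internally, as the paper does. Conversely, the paper's intrinsic proof silently uses an Archimedean-type fact --- that there is a largest integer $b$ with $bx\le f$ --- which is where your $\lambda$-free substitute would have to do some work; it can be supplied without $\mathbb{R}$ from Lemma \ref{linear bound}, since $|f(n)-nf(1)|<|n|C$ shows $(f(1)+C+1)x-f$ and $f+(|f(1)|+C+1)x$ are positive, bounding the admissible $b$ above and below. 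If you intend your version to stand in the paper, you should either accept the dependence on Section \ref{iso} and flag that the uncountability argument is then no longer independent of $\mathbb{R}$, or rework the existence of $a$ along the intrinsic lines just sketched.
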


\begin{proof}
    Consider the set of near-endomorphisms $bx$ for $b\in \mathbb{Z}$ that are less than or equal to $f$, for near endomorphism $f.$ There must be a largest such integer $b,$ which we will call $a$.

    Since $ax < f$, we know that $f - ax > 0$. On the other hand, if $f - ax \leq x$, then $f - (a + 1)x \leq 0$, which would contradict $a$'s maximality. Therefore, $0 \leq f - ax < x.$

   Assume for the sake of contradiction that $a$ is not unique. Then there exists an integer $c \neq a$ such that $0 \leq f - cx < x$. If $c > a$, then $f - cx < 0$ because of $a$'s maximality. If $c < a$, then $f - cx = f - ax + (a - c)x \geq (a - x)x \geq x.$ Therefore, by contradiction, $a$ is the unique.
\end{proof}

\begin{defn}
    Suppose $f$ is a near endomorphism. If $a$ is the unique integer such that $0 \leq f - ax < x$, then define the integer part of $f$, $I(f) = a$.
\end{defn}

Now, we will define the following sequences:

\begin{defn}
    For near-endomorphism $f$, define the sequence of integers $A_f$ and the sequence of near endomorphisms $F$ in the following way:

    Let $f_1 = f$, $a_1 = I(f).$
    Then, for $n \leq 1$, let $f_{n + 1} = (f_n - a_nx)^{-1}$, and $a_{n + 1} = f_{n + 1},$ if $(f_n - a_nx)^{-1}$ is defined. Otherwise, both $A_f$ and $F$ will terminate.
\end{defn}

\begin{lemma}
    If $f \sim g$, then $ A_f = A_g$
\end{lemma}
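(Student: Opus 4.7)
The plan is to prove this by induction on $n$, establishing the stronger statement that $f_n \sim g_n$ for every index $n$ at which either sequence is defined. This suffices because $a_n$ is defined as $I(f_n)$, and the first step will be to show that the integer part function $I$ descends to a well-defined function on the quotient $\mathbb{E}$. The base case $f_1 \sim g_1$ is just the hypothesis.

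First, I would verify that $I$ is constant on equivalence classes. By the preceding lemma, $I(f) = a$ is characterized by the two inequalities $0 \le f - ax$ and $f - ax < x$ in the ordering on near-endomorphisms defined in Section \ref{construct}. Since that ordering is defined by positivity/boundedness and the trichotomy lemma ensures that adding a bounded function cannot convert a positive function into a negative one or vice versa, the relations $0 \le f - ax < x$ depend only on the equivalence class of $f$. Concretely, if $f \sim f'$, then $(f - ax) - (f' - ax) = f - f'$ is bounded, so $f - ax$ is positive (respectively, $x - (f - ax)$ is positive) if and only if the same holds for $f'$. Hence $I(f) = I(f')$.

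For the inductive step, assume $f_n \sim g_n$. Well-definedness of $I$ gives $a_n := I(f_n) = I(g_n)$, which is the $n$-th entry of both $A_f$ and $A_g$. Then $f_n - a_n x \sim g_n - a_n x$ since subtracting the same function preserves equivalence. Now I would invoke the fact, established in Section \ref{construct}, that $\mathbb{E}$ is a field: equivalent near-endomorphisms represent the same element of $\mathbb{E}$, and the multiplicative inverse in $\mathbb{E}$ is unique, so any two near-endomorphism inverses of equivalent representatives must themselves be equivalent. Therefore
\[ f_{n+1} = (f_n - a_n x)^{-1} \sim (g_n - a_n x)^{-1} = g_{n+1}, \]
completing the induction.

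It remains to address termination. The sequence $A_f$ terminates at step $n$ precisely when $f_n - a_n x$ is bounded, since in that case $f_n - a_n x$ represents $0 \in \mathbb{E}$ and has no multiplicative inverse. But $f_n \sim g_n$ (with the common value $a_n$) implies $f_n - a_n x$ is bounded if and only if $g_n - a_n x$ is, so $A_f$ and $A_g$ terminate at exactly the same index and the finite sequences still coincide. The main subtlety is the well-definedness of $I$ on $\mathbb{E}$, i.e., checking that the defining inequalities survive perturbation by a bounded function; once this is in hand, the rest is formal manipulation in the field $\mathbb{E}$.
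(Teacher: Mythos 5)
Your proof is correct and takes essentially the same approach as the paper's: induction maintaining $f_n \sim g_n$, using that $I$ is constant on equivalence classes (so the $a_n$ agree) and that inverses of equivalent near-endomorphisms are equivalent, with termination handled identically. You merely spell out the well-definedness of $I$ and the uniqueness-of-inverse step in more detail than the paper does.
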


\begin{proof}
    If for near endomorphisms $f, g$, we have $f \sim g$, then $f - g \sim 0$. By definition, $0 \leq g - I(g)x < x$, and adding $f - g \sim 0$ to each part of this inequality, $0 \leq f - I(g)x < x$. But by the uniqueness of $I(f)$, it must be true that $I(f) = I(g)$. Therefore, $A_f$ and $A_g$ have the same $a_1$ term.

    Assume for the sake of induction that $A_f$ and $A_g$ have the same $a_n$ term. Furthermore, assume that $f_n \sim g_n.$ Then, $f_{n+1} = (f_n - a_n)^{-1} \sim (g_n - a_n)^{-1} = g_{n+1}$, or both will be undefined and the sequences $A_f$ and $A_g$ both terminate.

    If $f_{n+1}$ and $g_{n+1}$ are both defined, since they are equivalent, $a_{n+1} = I(f_{n+1}) = I(g_{n + 1})$ will be the same for $A_f$ and $A_g$.

    Therefore, by induction, $A_f = A_g.$
    
\end{proof}

Thus, we have seen that every equivalence class $f$ of near endomorphisms corresponds to a unique finite or infinite sequence of integers $A_f.$ Now we will consider the reverse.

\begin{lemma}
    Every infinite sequence of positive integers $A_f$ corresponds to an equivalence class of near endmorphisms, $f$.
\end{lemma}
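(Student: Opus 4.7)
My plan is to realize $f$ as a limit of the finite truncations of the continued fraction $[a_1; a_2, a_3, \ldots]$ inside $\mathbb{E}$, exploiting the Completeness lemma proven above. Since $\mathbb{E}$ has been shown intrinsically to be a complete ordered field, we can proceed without presupposing the reals: construct a monotone-bracketed sequence of elements of $\mathbb{E}$ whose gaps shrink, and then invoke completeness to extract the unique element trapped by them.

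For each $N \geq 1$, I would define $f^{(N)} \in \mathbb{E}$ by backward recursion. Setting $g_N = a_N \cdot x$ (where $x$ denotes the identity near-endomorphism), and for $k = N-1, N-2, \ldots, 1$ setting $g_k = a_k \cdot x + g_{k+1}^{-1}$, put $f^{(N)} := g_1$; each inversion is legal because every $g_{k+1}$ is positive (hence unbounded) in $\mathbb{E}$ using $a_i \geq 1$, so the Multiplicative Inverse lemma applies. Introduce the usual integer denominators $q_N$ via $q_0 = 0$, $q_1 = 1$, $q_{N+1} = a_{N+1} q_N + q_{N-1}$, so that $f^{(N)}$ plays the role of the convergent $p_N/q_N$. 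Using the ordered-field structure of $\mathbb{E}$, verify the interlacing
\[
f^{(1)} < f^{(3)} < f^{(5)} < \cdots < f^{(6)} < f^{(4)} < f^{(2)},
\]
together with the identity $(q_N q_{N+1}) \cdot (f^{(N+1)} - f^{(N)}) = (-1)^{N+1} x$, which bounds the gap by the multiplicative inverse of an unboundedly growing integer (the $q_N$ grow at least Fibonacci-fast since $a_i \geq 1$). Completeness then furnishes a common element $f \in \mathbb{E}$ that is simultaneously the supremum of the odd-indexed subsequence and the infimum of the even-indexed subsequence. Finally, confirm $A_f = (a_1, a_2, \ldots)$ by induction on the index: the bracket $a_1 x \leq f < (a_1 + 1)x$ (inherited from the bracket on the $f^{(N)}$) forces $I(f) = a_1$, and since each truncation satisfies $(f^{(N)} - a_1 x)^{-1}$ equal to the analogous truncation for the shifted sequence $(a_2, \ldots, a_N)$, passing to the limit shows $(f - a_1 x)^{-1}$ is the Eudoxus element built from $(a_2, a_3, \ldots)$, so the inductive step propagates.

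I expect the main obstacle to be the second step: pushing the classical continued-fraction identities, especially $p_{N+1} q_N - p_N q_{N+1} = (-1)^N$, through to the corresponding assertions about the $f^{(N)} \in \mathbb{E}$, and then chaining them into the interlacing inequalities, all working inside $\mathbb{E}$ without borrowing facts from the reals. Once that bookkeeping is in place, completeness delivers the limit $f$ immediately, and the concluding induction to recover $A_f$ is routine.
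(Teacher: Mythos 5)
Your proposal is correct in outline but takes a genuinely different route from the paper. The paper writes down an explicit representative: the function $f(x)=\frac{P_n x}{Q_n}$ (integer division) on each block $Q_n\le x<Q_{n+1}$, where $P_n/Q_n$ are the convergents of $[a_1,a_2,\ldots]$; it verifies the near-endomorphism inequality directly from $P_{n+1}Q_n-P_nQ_{n+1}=(-1)^{n+1}$ and the alternation of convergents, extends to $\mathbb{Z}$ by $f(-x)=-f(x)$, computes the inverse explicitly (the class of $xP_n/Q_n$ inverts to that of $xQ_n/P_n$), and then shows by induction that $I(f_n)=a_n$. You never touch a representative: you build the finite convergents as elements of $\mathbb{E}$ by field operations, establish the interlacing and the gap identity inside the ordered field, invoke the Completeness lemma (together with the Archimedean property it implies) to trap the desired class, and finish by using that inversion reverses order on positive elements to propagate the induction. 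Your route buys algebraic cleanliness --- no integer-division bookkeeping --- at the cost of resting on the Completeness and Multiplicative Inverse lemmas, the former only cited from Arthan rather than proved in the paper, and it partly undercuts the section's stated aim of arguing directly from the definition of a near-endomorphism (once completeness is granted, uncountability itself follows from a nested-interval argument with no continued fractions at all). If you write it up, make two steps explicit: the Archimedean argument showing the supremum of the odd-indexed convergents coincides with the infimum of the even-indexed ones, and the strict positivity $f_n-a_nx>0$ at every stage, which is what guarantees each inverse exists and the expansion never terminates.
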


\begin{proof}
    For the purposes of this proof, $\frac{a}{b}$ will be used to represent integer division. That is, $\frac{a}{b}$ will evaluate to the unique integer $q$ such that $a = bq + r$, where $0 \leq r < b.$\\

    Suppose the set $A_f = {a_1, a_2, a_3, \cdots}$. Let $P_n, Q_n \in \mathbb{N}$ represent the numerator and the denominator of the $n$th convergent of the continued fraction $[a_1, a_2, a_3, \cdots].$ Define the piecewise function $f = \frac{P_n\cdot x}{Q_n}$ where $Q_n \leq x < Q_{n + 1}.$ We can show that $f$ is a near endomorphism of the natural numbers.

    For $x, y \in \mathbb{N}$, suppose $Q_a \leq  x < Q_{a + 1}$, $Q_b \leq  x < Q_{b + 1}$, and $Q_c \leq  x + y < Q_{c + 1}.$ Note that $c \geq a, b.$ Then $f(x + y) - f(x) - f(y) = \frac{(x + y)P_c}{Q_c} - \frac{xP_a}{Q_a} - \frac{yP_b}{Q_b}.$

    However, $\frac{(x + y)P_c}{Q_c} = \frac{xP_c}{Q_c} + \frac{yP_c}{Q_c} + \delta$, where $\delta = 0$ or $1$. Therefore $f(x + y) - f(x) - f(y) = \left( \frac{xP_c}{Q_c} - \frac{xP_a}{Q_a} \right) + \left(  \frac{yP_c}{Q_c} - \frac{yP_b}{Q_b}\right) + \delta.$

   By the triangle inequality, $\abs{f(x + y) - f(x) - f(y)} \leq \abs{\frac{xP_c}{Q_c} - \frac{xP_a}{Q_a}} + \abs{\frac{yP_c}{Q_c} - \frac{yP_b}{Q_b}} + \delta.$ Because $c \leq a, b$ and continued fractions convergents alternate, the convergent farthest away from the $n$th convergent of a continued fraction is the $n + 1$st convergent, of the convergents past the \textit{n}th convergent. Therefore, this expression is less than or equal to $\abs{\frac{xP_{a+1}}{Q_{a+1}} - \frac{xP_a}{Q_a}} + \abs{\frac{yP_{b+1}}{Q_{b+1}} - \frac{yP_b}{Q_b}} + \delta.$

   This can be rewritten as $\abs{\frac{xP_{a+1}Q_a}{Q_{a+1}Q_a} - \frac{xP_aQ_{a+1}}{Q_aQ_{a+1}}} + \abs{\frac{yP_{b+1}Q_b}{Q_{b+1}Q_b} - \frac{yP_bQ_{b+1}}{Q_bQ_{b+1}}} + \delta.$ 
   
   This is equal to $\abs{\frac{xP_{a+1}Q_a - xP_aQ_{a+1}}{Q_{a+1}Q_{a}} + \delta_1} + \abs{\frac{yP_{b+1}Q_b - yP_bQ_{b+1}}{Q_{b+1}Q_b}+ \delta_2} + \delta,$ where $\delta_1, \delta_2 = 0, -1$.

   Since $P_{a+1}Q_a - P_aQ_{a+1} = (-1)^{a+1}$, and likewise for $b$, and since the absolute values get rid of the negative ones, this is equal to. $\abs{\frac{x}{Q_{a+1}Q_{a}} + \delta_1} + \abs{\frac{y}{Q_{b+1}Q_b}+ \delta_2} + \delta$.

   Lastly, since $x < Q_{a+1}$ and $y < Q_{b + 1},$ this is less than $\abs{\frac{1}{Q_{a}} + \delta_1} + \abs{\frac{1}{Q_b}+ \delta_2} + \delta$, which is indeed bounded since it becomes smaller as $a, b$ get bigger. Therefore, since $|f(x+y) - f(x) - f(y)|$ was bounded by this quantity, $f$ must be a near endomorphism over the natural numbers.

   Furthermore, according to Arthan \cite{arthan}, defining $f(x) = -f(-x)$ for $x < 0$ and $f(0) = 0$ will ensure that $f$ is a near endomorphism over the integers.\\

   Next, consider the fact that $(\frac{xP_n}{Q_n})^{-1} = \frac{xQ_n}{P_n}$. We know this because $\frac{xP_n}{Q_n} = y$ is the number such that $xP_n = yQ_n + r$, where $0 \leq r < Q_n.$ Plugging \textit{y} into $\frac{xQ_n}{P_n}$, we get $\frac{yQ_n}{P_n} = \frac{xP_n - r}{P_n}$. Since $r < Q_n \leq P_n$, this evaluates to $x - 1 \sim x$.

   Similarly, we can define the piecewise function $g(x) = \frac{xQ_n}{P_n}$ for $P_n \leq x < P_{n+1}.$ This function is the inverse of $f = {P_n\cdot x}{Q_n}$ where $Q_n \leq x < Q_{n + 1}.$ We can see that the range of $f(x)$ corresponding with the domain restriction $Q_n \leq x < Q_{n + 1}$ matches up with the domain restriction on $g:$ $P_n \leq x < P_{n+1}:$

   $Q_n \leq x < Q_{n + 1} \implies P_n \leq x < \frac{Q_{n+1}P_n}{Q_n}$. But $P_{n+1}Q_n - Q_{n+1}P_n = (-1)^{n+1} \implies Q_{n+1}P_n = P_{n+1}Q_n - (-1)^{n+1}.$ So $x < \frac{Q_{n+1}P_n}{Q_n} = \frac{P_{n+1}Q_n - (-1)^{n+1}}{Q_n} < P_{n+1}$. Therefore, $P_n \leq x < P_{n+1}.$\\

   Now, we will show that the sequence of integers associated with $f(x)$ is indeed $A_f:$
   Note firstly that $I(f) = a_1$, the first element of the sequence of integers that makes up the continued fractions used to construct $f.$ We will show this below:
   
   $f(Q_n) = P_n$ for all of the infinitely many convergents of $[a_1, a_2, a_3, \cdots].$ For these points, $\frac{f(x)}{x} = a_1.$

   Assume for the sake of contradiction that $I(f) > a_1$. Then, $f(x) - xI(f) < 0$ for infinitely many \textit{x}: namely, all the $x = Q_n.$ By the definition of positive, this would mean that $f(x) - xI(f)$ is not greater than or equal to 0, which contradicts the definition of $I(f).$

   Now, assume for the sake of contradiction that $I(f) < a_1.$ Then $f(x) - xI(f) \geq x$ for the infinitely many values $x = Q_n.$ Therefore, there are infinitely many values where $f(Q_n) - Q_nI(f) - Q_n\geq 0$, meaning that $f(x) - xI(f) - x$ is positive by definition, and therefore, $f(x) - xI(f) \geq x,$ also contradicting the definition of $I(f).$ Thus, $I(f) = a_1.$

   But $f(x) - xI(f) = \frac{xP_n}{Q_n} - a_1x \equiv \frac{x(P_n - a_1Q_n)}{Q_n}.$ This just equals $\frac{xP_n'}{Q_n'}$, where $\frac{P_n}{Q_n}$ are the convergents of the continued fraction $[0, a_2, a_3, a_4, \cdots].$
   
   According to the inverse rule we found earlier, $f_2 = (f(x) - xI(f))^{-1} = \frac{xQ_n'}{P_n'}$, but $\frac{Q_n'}{P_n'}$ are just the reciprocals of the convergents of the continued fraction $0 + \frac{1}{a_2 + \frac{1}{a_3 + \cdots}}$, which are just the convergents of the continued fraction $[a_2, a_3, a_4, \cdots].$

   Therefore, just as the first terms in our sequence of numbers was $I(f) = a_1$, the second term is $I(f_2) = a_2.$ It can be shown through induction that this process continues, as the \textit{n}th term in the sequence produced by $f(x)$ is $I(f_n) = a_n.$ Therefore, $f(x)$ produces the sequence of integers $A_f.$ Furthermore, there exists an equivalence class of near endomorphisms that produces $A_f$: namely, the one containing $f.$

   \end{proof}

   With these lemmas, we may adapt Cantor's diagonalization argument to prove that there are uncountably many real numbers using the Eudoxus construction.

   \begin{theorem}
       There are uncountably many real numbers.
   \end{theorem}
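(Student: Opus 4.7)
The plan is to funnel Cantor's classical diagonal argument through the correspondence $f \mapsto A_f$ between equivalence classes of near-endomorphisms and (finite or infinite) sequences of positive integers developed in the preceding lemmas. Those lemmas already supply what we need: the map $f \mapsto A_f$ is well-defined on equivalence classes (and distinct $A_f$'s come from inequivalent $f$'s), and, conversely, every infinite sequence of positive integers is realized as $A_f$ for some $f \in \mathbb{E}$. Together these facts let us convert an uncountability statement about sequences into an uncountability statement about $\mathbb{E}$.

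The argument I would write is a standard diagonal proof by contradiction. Suppose $\mathbb{E}$ were countable and enumerate its elements as $f_1, f_2, f_3, \ldots$, with associated sequences $A_{f_n} = (a_{n,1}, a_{n,2}, a_{n,3}, \ldots)$. Define a new sequence $B = (b_1, b_2, b_3, \ldots)$ of positive integers by
\[
b_n = \begin{cases} a_{n,n} + 1, & \text{if the } n\text{-th entry of } A_{f_n} \text{ is defined}, \\ 1, & \text{otherwise}. \end{cases}
\]
Because $B$ is an infinite sequence of positive integers, the converse lemma produces some $f \in \mathbb{E}$ with $A_f = B$. By construction $B \neq A_{f_n}$ for every $n$ (either the sequences differ in position $n$, or $A_{f_n}$ terminates early while $B$ does not), so the injectivity of $f \mapsto A_f$ on equivalence classes forces $f \notin \{f_1, f_2, \ldots\}$, contradicting the enumeration.

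The one point requiring care—and the main potential obstacle—is the treatment of sequences $A_{f_n}$ that terminate in finitely many steps. The piecewise definition of $b_n$ above is designed exactly to dodge this: a finite $A_{f_n}$ cannot equal our infinite $B$ for length reasons alone, and in the infinite case the shift $a_{n,n}+1$ guarantees a mismatch at the $n$-th coordinate. Since all finite $A_f$ together form a countable set (finite tuples of integers), the new class $f$ constructed from $B$ genuinely lies among the infinite-sequence classes and truly escapes the enumeration. Everything else in the writeup is a clean appeal to the previous two lemmas, so I expect the only real bookkeeping to be this finite/infinite case split.
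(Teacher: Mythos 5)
Your proposal is essentially the paper's proof: the same diagonal argument by contradiction, using the well-definedness of $f \mapsto A_f$ on equivalence classes in one direction and the realization of every infinite sequence of positive integers as some $A_f$ in the other, with the same case split for sequences that terminate. One small repair: you set $b_n = a_{n,n}+1$, but the first entry of a sequence $A_{f_n}$ is an integer part $I(f_n)$ and can be negative or zero, so $b_1$ need not be positive and the realization lemma (which only covers sequences of \emph{positive} integers) would not apply; the paper avoids this by taking $|a_{r(n),n}|+1$ instead, and with that adjustment your argument goes through verbatim.
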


   \begin{proof}
        Assume for the sake of contradiction that there are countably many equivalence classes of near endomorphisms. Therefore, we can order them into a sequence $r(1), r(2), r(3), \cdots$. Each of these real numbers may be expressed as a finite or infinite sequence of integers, $A_{r(n)}.$

        Now, consider the sequence of numbers $A,$ such that $a_n = |a_{r(n), n}| + 1$ if $a_{r(n), n}$, the $nth$ element of $A_{r(n)}$ is defined, or $a_n = 1$ otherwise.

        Since $A$ is a sequence of natural numbers, there exists an equivalence class of near endomorphisms that produces $A.$ Meanwhile, none of the equivalence classes of near endomorphisms $r(1), r(2), \cdots$ could have produced A because they all produce a unique sequence $A_{r(n)}.$ $A_{r(n)}$ and $A$ cannot be the same sequence of integers because, when $A_{r(n)}$'s \textit{n}th term is defined, it is unequal to the \textit{n}th term of $A.$

        Therefore, the equivalence class of near endomorphisms that produced $A$ cannot have been included in our list $r(1), r(2), \cdots,$ which is a contradiction because the list included all the equivalence classes of near endomorphisms. Since we have reached a contradiction, our assumption must have been false, and there are uncountably many equivalence classes of near endomorphisms.

        Since $\mathbb{E} \cong \mathbb{R},$ the real numbers are uncountably infinite.
       
   \end{proof}

\section{Generalizing the Eudoxus Reals Construction}
\label{generalize}

We generalize the above construction and determine the ring of quasi-endomorphisms of other abelian groups. Just like the construction of the reals using the integers, this construction has the advantage of being able to create relatively complicated objects out of much simpler ones.

Following Hermans \cite{hermans}, we define the following, which are a natural generalization of the definitions over $\mathbb{Z}$.

\begin{defn}
    Let $A$ and $B$ be two abelian groups. Define a function $f: A \to B$ be an \textit{almost-homomorphism} if $\{f(a+b) - f(a) - f(b) \}$ is finite. Denote the set of almost-homomorphisms from $A$ to $B$ by $\AHom(A, B)$.
\end{defn}

Here, the requirement that $f(a+b)-f(a)-f(b)$ is finite is a natural generalization of the requirement for a near-endomorphism of $\mathbb Z$ that $|f(a+b)-f(a)-f(b)|$ is bounded, as the former condition is equivalent to the latter when considering the abelian group as $\mathbb Z$. Similarly, we generalize the notion of a bounded function.

\begin{defn}
    Let $A$ and $B$ be two abelian groups. Define a function $f: A \to B$ to be \textit{almost-zero} if $\{ f(a) | a\in A\}$ is finite. Denote the set of almost-zero functions from $A$ to $B$ by $\Az(A, B)$.
\end{defn}

It is not hard to check that $\Az(A, B)$ is a subgroup of $\AHom(A,B)$, where both sets are taken to have the operation of pointwise addition. Since both groups are abelian, we can define the quotient: 

\begin{defn}
    For abelian groups $A$ and $B$, a \textit{quasi-homomorphism} $f: A \to B$ is an element of the quotient group $\AHom(A, B)/\Az(A, B)$. Denote the group of quasi-homomorphisms by $\qhom(A, B)$.
\end{defn}

Following \cite{hermans}, we observe that the abelian groups form a category.

\begin{theorem}\cite{hermans}
    Abelian groups together with the set $\qhom(A, B)$ of morphisms between them form a category \textsf{Qab}, with the natural composition law.
\end{theorem}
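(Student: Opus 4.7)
My plan is to verify the three categorical axioms: (i) a well-defined composition law $\qhom(B,C) \times \qhom(A,B) \to \qhom(A,C)$, (ii) associativity of that composition, and (iii) existence of an identity morphism for each abelian group. The natural choice for composition is just function composition on representatives, so the whole theorem reduces to checking that this descends cleanly to the quotients by $\Az$.

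First I would show that $\AHom$ is closed under composition. Given $f \in \AHom(B,C)$ and $g \in \AHom(A,B)$, let $E_f := \{f(x+y)-f(x)-f(y)\}$ and $E_g := \{g(a+b)-g(a)-g(b)\}$, both finite by assumption. For any $a,b\in A$, writing $g(a+b) = g(a)+g(b)+e_g$ with $e_g\in E_g$, I would expand
\begin{align*}
(f\circ g)(a+b) - (f\circ g)(a) - (f\circ g)(b)
&= f\bigl(g(a)+g(b)+e_g\bigr) - f(g(a)) - f(g(b)) \\
&= f(e_g) + \varepsilon_1 + \varepsilon_2,
\end{align*}
where $\varepsilon_1 := f(g(a)+g(b)+e_g) - f(g(a)+g(b)) - f(e_g) \in E_f$ and $\varepsilon_2 := f(g(a)+g(b)) - f(g(a)) - f(g(b)) \in E_f$. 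Since the error lies in the finite set $f(E_g) + E_f + E_f$, the composition $f\circ g$ is in $\AHom(A,C)$.

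Next I would check that the composition is independent of representatives, i.e.\ if $f' = f + z_f$ with $z_f \in \Az(B,C)$ and $g' = g + z_g$ with $z_g \in \Az(A,B)$, then $f'\circ g' - f\circ g \in \Az(A,C)$. Writing
\[
f'(g'(a)) - f(g(a)) = z_f(g(a)+z_g(a)) + \bigl[f(g(a)+z_g(a)) - f(g(a))\bigr],
\]
the first term takes only finitely many values because $z_f$ does, and the second term equals $f(z_g(a)) + \eta$ with $\eta \in E_f$, so it ranges over the finite set $f(\{z_g(a)\}) + E_f$. With well-definedness in hand, associativity is automatic because function composition is strictly associative on representatives, and the identity morphism on $A$ is represented by $\mathrm{id}_A$, which is a homomorphism hence trivially an almost-homomorphism and a two-sided identity on the nose.

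The main obstacle, and the only nontrivial calculation, is the closure argument in the second paragraph: one needs to insert $f(g(a)+g(b))$ as an intermediate term so that both almost-homomorphism defects of $f$ can be applied while only evaluating $f$ on the finite set $E_g$. Once that bookkeeping is done, the well-definedness argument is a direct analogue using the same trick, and the categorical axioms fall out immediately.
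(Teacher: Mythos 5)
Your proposal is correct: the closure computation (inserting $f(g(a)+g(b))$ so the error lands in the finite set $f(E_g)+E_f+E_f$), the well-definedness check modulo $\Az$, and the observations that associativity and identities hold on the nose for representatives together establish exactly what the theorem asserts. The paper itself gives no proof, citing Hermans for this result, and your argument is the standard verification one would expect there, so there is nothing further to compare.
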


Moreover, if we define ``multiplication" of functions by composition, it is not hard to verify that $\qhom(A,B)$ in fact forms a ring, using similar reasoning as for the integers $\mathbb{Z}$. It is often helpful, as in the case for $\mathbb Z$, to consider the ring of quasi-homomorphisms from a group to itself, so we define:

\begin{defn}
    For an abelian group $A$, denote the ring of quasi-homomorphisms $\qhom(A, A)$ by $\qend(A)$.
\end{defn}

With this more general definition, it is natural to ask about the ring of quasi-endomorphisms of various groups. From before, we already know one:

\begin{example}
    The ring of quasi-endomorphisms of the integers $\qend(\mathbb Z)$ is isomorphic to $\mathbb R$.
\end{example}

As another example:

\begin{remark}
    Note that $\qend(A)$ is not interesting if $A$ is a finite group: every function $f: A \to A$ takes on finitely many values and therefore is automatically an almost-homomorphism. Furthermore, it is also almost-zero, so all functions are in fact in the same equivalence class, implying that $\qend(A) \cong \{0\}$.
\end{remark}

\section{The Ring of Quasi-Endomorphisms of \texorpdfstring{$S^{-1}\mathbb{Z}/\mathbb{Z}$}{YP17}}
\label{local}

Hermans \cite{hermans} has previously proven the following.

\begin{theorem}\cite{hermans}\label{hermansp}
    The ring $\qend(\mathbb{Z}[\frac{1}{p}]/\mathbb{Z})$ is isomorphic to the ring of $p$-adic numbers $\mathbb Q_p$.
\end{theorem}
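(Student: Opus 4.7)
The plan is to exhibit a ring isomorphism $\Phi : \mathbb{Q}_p \xrightarrow{\sim} \qend(G)$, where $G := \mathbb{Z}[\tfrac{1}{p}]/\mathbb{Z}$ is the Pr\"ufer $p$-group with $p^n$-torsion $G[p^n] \cong \mathbb{Z}/p^n$ and $G = \bigcup_n G[p^n]$. Recall the classical identification $\operatorname{End}(G) \cong \mathbb{Z}_p$, sending $\beta$ to the endomorphism acting on each $G[p^n]$ by multiplication by $\beta \bmod p^n$. The induced map $\mathbb{Z}_p \to \qend(G)$ is injective: a nonzero $\beta$ with $v_p(\beta) = v$ sends $1/p^n + \mathbb{Z}$ to an element of order $p^{n-v}$ for $n > v$, producing images of unbounded order. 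To extend this embedding to $\mathbb{Q}_p$, it suffices to invert $p$ in $\qend(G)$. Define $\iota : G \to G$ by $\iota(a/p^n + \mathbb{Z}) := a/p^{n+1} + \mathbb{Z}$ on canonical representatives ($0 \le a < p^n$ with $n$ minimal). Then $\iota$ is an almost-homomorphism, as the error $\iota(x+y) - \iota(x) - \iota(y)$ arises only from reducing sums to canonical form and lies in the finite group $G[p]$. Moreover $p \circ \iota$ equals the identity exactly, while $\iota \circ p$ differs from the identity by a function valued in $G[p]$. Hence $[\iota]$ is a two-sided inverse of $[p]$ in $\qend(G)$, and $\Phi$ extends uniquely to $\mathbb{Q}_p$ by $\Phi(\beta/p^k) := [\iota]^k \cdot \Phi(\beta)$.

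For the inverse map $\Psi : \qend(G) \to \mathbb{Q}_p$, fix an almost-homomorphism $f$ whose error set $\{f(a+b) - f(a) - f(b) : a,b \in G\}$ is contained in $G[p^M]$. Iterating the defining condition yields $f(kx) - kf(x) \in G[p^M]$ for every integer $k \ge 1$ and every $x \in G$. Setting $k = p^n$ and $x = 1/p^n + \mathbb{Z}$ (so $kx = 0$) gives
\[ p^n \, f(1/p^n + \mathbb{Z}) \in f(0) + G[p^M], \]
a finite set. Enlarging $M$ to absorb it, we find $f(1/p^n + \mathbb{Z}) \in G[p^{n+M}]$ and may write $f(1/p^n + \mathbb{Z}) = c_n/p^{n+M} + \mathbb{Z}$ with $0 \le c_n < p^{n+M}$. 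Applying the almost-hom relation to $1/p^n + \mathbb{Z} = p \cdot (1/p^{n+1} + \mathbb{Z})$ then forces $c_{n+1} \equiv c_n \pmod{p^n}$, so $(c_n \bmod p^n)_n$ is a compatible system in $\varprojlim \mathbb{Z}/p^n = \mathbb{Z}_p$, yielding some $\beta \in \mathbb{Z}_p$. Set $\Psi(f) := \beta/p^M \in \mathbb{Q}_p$, and check independence of the choice of $M$ and of the representative of $[f]$.

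Finally, I would verify that $\Phi$ and $\Psi$ are mutually inverse ring homomorphisms. The composition $\Psi \circ \Phi$ is the identity on $\mathbb{Q}_p$ by a direct computation of $\Psi$ applied to each $\Phi(\beta/p^k)$. For $\Phi \circ \Psi = \operatorname{id}$, by construction $f$ and $\Phi(\Psi(f))$ agree on each $1/p^n + \mathbb{Z}$ modulo $G[p^M]$; since $f(a/p^n + \mathbb{Z}) = a \cdot f(1/p^n + \mathbb{Z})$ up to a term in a fixed finite subgroup (by iterating the almost-hom condition), this forces $f \sim \Phi(\Psi(f))$ on all of $G$. Additivity of $\Phi$ is clear from the definition, and multiplicativity reduces to the commutation $(\text{mult by } \beta) \circ \iota \sim \iota \circ (\text{mult by } \beta)$, whose error lies in $G[p]$. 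The main obstacle is the quantitative lemma at the heart of $\Psi$: proving $p^n f(1/p^n + \mathbb{Z})$ lies in a uniformly finite subset of $G$ is what pins the denominator of $f(1/p^n + \mathbb{Z})$ to grow at most like $p^{n+M}$ with $M$ fixed, thereby guaranteeing $\Psi(f)$ has bounded $p$-adic valuation and lands in $\mathbb{Q}_p$ rather than in some larger completion. Controlling error propagation through the iterated almost-hom condition, so that $c_{n+1} - c_n$ is cleanly congruent to zero modulo $p^n$ rather than drifting, is the crucial technical step.
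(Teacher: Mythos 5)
The paper does not actually prove this statement---it is quoted from Hermans (his Proposition 2.4), so there is no in-paper argument to compare against; judged on its own merits, your proposal is correct and follows what is essentially the standard route for this result. The two pillars are sound: (i) the embedding $\mathbb{Z}_p\cong\operatorname{End}(\mathbb{Z}[\tfrac1p]/\mathbb{Z})\hookrightarrow\qend(\mathbb{Z}[\tfrac1p]/\mathbb{Z})$ together with the explicit almost-homomorphism $\iota$ whose class inverts $[p]$ (your computation that the error of $\iota$ lands in the $p$-torsion subgroup is right, and $[\iota]$ is automatically central because $[p]$ is), and (ii) the inverse map $\Psi$, where the crucial observation is that the error set of an almost-homomorphism lies in a finite subgroup $G[p^M]$, so iterated errors do not accumulate; this gives $f(kx)-kf(x)\in G[p^M]$ for all $k$, hence $p^nf(1/p^n)\in f(0)+G[p^M]$ and the congruences $c_{n+1}\equiv c_n \pmod{p^n}$ exactly as you state. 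This "errors stay in a fixed finite subgroup" point is what replaces the additive error bookkeeping used for $\mathbb{Z}$ in Section \ref{iso}, and it is the heart of why the construction pins down an element of $\mathbb{Q}_p$ with bounded denominator $p^M$. The items you flag but do not carry out (independence of $\Psi(f)$ from the choice of $M$ and of the representative of $[f]$, and the check that $\Phi$ restricted to $\mathbb{Z}_p$ is multiplicative) are genuinely routine given your setup, so I see no gap---only the expected verification work.
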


Using the fact that $\mathbb{Q}/\mathbb{Z} \cong \oplus_{p \text{ prime}} \mathbb{Z}[\frac{1}{p}]/\mathbb Z$, Hermans also proves the following.

\begin{theorem}\cite{hermans}
    The ring $\qend(\mathbb{Q}/\mathbb{Z})$ is isomorphic to the finite adele ring of the rationals, namely \[  \sideset{}{'}\prod_{p \text{ prime}} (\mathbb{Q}_p, \mathbb{Z}_p) := \mathbb{A}_{\mathbb{Q}}^{\text{fin}}.\]

    Furthermore, \[\qend(\mathbb Q) \cong \mathbb{A}_{\mathbb{Q}} := \mathbb{R} \times \sideset{}{'}\prod_{p \text{ prime}} (\mathbb{Q}_p, \mathbb{Z}_p)\]
\end{theorem}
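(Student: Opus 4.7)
The plan is to establish the two isomorphisms in sequence, using the primary decomposition $\mathbb{Q}/\mathbb{Z} \cong \bigoplus_p \mathbb{Z}[1/p]/\mathbb{Z}$ and Theorem \ref{hermansp} for the first, and the Eudoxus construction $\qend(\mathbb{Z}) \cong \mathbb{R}$ together with the first isomorphism for the second.

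For $\qend(\mathbb{Q}/\mathbb{Z}) \cong \mathbb{A}_\mathbb{Q}^{\text{fin}}$, I would take $f \in \AHom(\mathbb{Q}/\mathbb{Z},\mathbb{Q}/\mathbb{Z})$, let $\iota_p, \pi_p$ denote the inclusion and projection for the $p$-primary component, and define $f_{pq} := \pi_q \circ f \circ \iota_p$. The first lemma to prove is off-diagonal vanishing: $f_{pq} \in \Az$ whenever $p \neq q$. For $a \in \mathbb{Z}[1/p]/\mathbb{Z}$ of order $p^k$, iterating the finite defect set $D$ gives $f(p^ka) - p^kf(a) \in \langle D \rangle$, so $p^k f_{pq}(a) \in \langle \pi_q(D) \rangle$, a finite subgroup of $\mathbb{Z}[1/q]/\mathbb{Z}$ annihilated by some $q^m$; since multiplication by $p$ acts bijectively on $\mathbb{Z}[1/q]/\mathbb{Z}$ when $p \neq q$, cancelling $p^k$ yields $q^m f_{pq}(a) = 0$, forcing the image of $f_{pq}$ into the finite $q^m$-torsion. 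The second step produces the restricted-product condition: since $D \subseteq (1/N)\mathbb{Z}/\mathbb{Z}$ for some integer $N$, for every prime $p \nmid N$ the projection $\pi_p(D) = 0$, so $f_{pp}$ is a genuine endomorphism, landing in $\mathrm{End}(\mathbb{Z}[1/p]/\mathbb{Z}) = \mathbb{Z}_p$; at the finitely many $p \mid N$, Theorem \ref{hermansp} places $f_{pp}$ in $\mathbb{Q}_p$. The inverse map assembles $(x_p) \in \mathbb{A}_\mathbb{Q}^{\text{fin}}$ by choosing representatives $f_p$ with $f_p$ a genuine endomorphism for $p$ outside the finite ``bad'' set, so the total defect $D = \bigcup_{p \text{ bad}} D_p$ remains finite.

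For $\qend(\mathbb{Q}) \cong \mathbb{A}_\mathbb{Q}$, define $\Phi: \qend(\mathbb{Q}) \to \mathbb{R} \times \qend(\mathbb{Q}/\mathbb{Z})$ by $\Phi(f) = (\alpha(f), \bar{f})$, where $\alpha(f) := \lim_{n\to\infty} f(n)/n \in \mathbb{R}$ (the Cauchy argument of Section \ref{iso} applies since $f|_\mathbb{Z}$ has finite, hence bounded, defect in $\mathbb{Q}$) and $\bar{f}(\bar{x}) := \overline{f(s(\bar{x}))}$ for a fixed section $s: \mathbb{Q}/\mathbb{Z} \to [0,1) \cap \mathbb{Q}$. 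The inverse takes $(\alpha, g)$ to $f(x) := \lfloor \alpha x \rfloor + s(g(\bar{x}))$, which is easily seen to be an almost-homomorphism with $\Phi(f) = (\alpha, g)$. For injectivity of $\Phi$, suppose $\alpha(f) = 0$ and $\bar{f} \in \Az$. The usual slope bound upgrades to $|f(x)| \leq 4M$ uniformly on $\mathbb{Q}$, and $\bar{f} \in \Az$ forces $f([0,1)\cap\mathbb{Q}) \subseteq \mathbb{Z} + F$ for a finite $F$, so $f$ takes only finitely many values on $[0,1)\cap \mathbb{Q}$. On $\mathbb{Z}$, the telescoping $f(n) = nf(1) + \sum d_i$ places $f(n)$ in the discrete lattice $(1/\mathrm{lcm}(\mathrm{denom}\, f(1), N))\mathbb{Z} \cap [-M, M]$, again finite. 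Combining via $x = n + s(\bar{x})$, $f$ has finite image globally. Multiplicativity of $\Phi$ is then routine: the real slope is multiplicative because $|f_1(x) - \alpha_1 x| \leq 4M_1$ extends the asymptotic behavior to all rationals, and $\overline{f_1 \circ f_2} \sim \bar{f_1} \circ \bar{f_2}$ because $f_1(\mathbb{Z})$ lies in a discrete lattice, making the correction $\overline{f_1(y)} - \overline{f_1(s(\bar{y}))}$ finitely valued in $\mathbb{Q}/\mathbb{Z}$.

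The main obstacle I anticipate is the off-diagonal vanishing lemma: because the generalized definition of almost-homomorphism requires only a finite defect set (not a uniform metric bound), the Archimedean techniques of the $\qend(\mathbb{Z})$ proof are unavailable, and one must instead exploit the structural fact that $\mathbb{Z}[1/q]/\mathbb{Z}$ is a divisible $q$-group on which multiplication by $p \neq q$ acts as an automorphism. Once this lemma and the analogous denominator-tracking for the $\mathbb{Q}$-case are established, the remaining verifications — well-definedness modulo $\Az$, additivity, and that the forward and inverse constructions are mutually inverse — are mechanical.
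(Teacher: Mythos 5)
The paper does not prove this statement---it is quoted from Hermans \cite{hermans}---so there is no internal proof to compare against line by line; the closest in-paper material is the finite-prime analogue of Section \ref{local} (Lemma \ref{hom0}, Lemma \ref{combine}, Theorem \ref{hermansp}). Your outline is essentially sound, and it amounts to a hands-on version of that strategy pushed to infinitely many primes: where Section \ref{local} handles finitely many factors by repeatedly invoking additivity of \textsf{Qab} (Lemma \ref{combine}), you decompose an almost-homomorphism of $\mathbb{Q}/\mathbb{Z}\cong\bigoplus_p \mathbb{Z}[1/p]/\mathbb{Z}$ into components $f_{pq}$ by hand and supply the uniform control that makes the restricted product appear. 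Your two key points are the right ones: (i) off-diagonal vanishing, proved via torsion/divisibility ($p^k f_{pq}(a)$ lies in the finite group $\langle\pi_q(D)\rangle$ and multiplication by $p$ is an automorphism of the $q$-Pr\"ufer group), which differs in flavor from the denominator/lcm argument of Lemma \ref{hom0} but proves the same kind of fact; and (ii) the common denominator $N$ of the defect set, which forces $f_{pp}$ to be a genuine endomorphism for $p\nmid N$ and kills $f_{pq}$ identically for $q\nmid N$, $q\neq p$---this is exactly the uniformity that the finite-product argument of the paper cannot see and that produces the restricted-product condition. Two items you wave off as mechanical deserve explicit statement in a write-up: first, that under the isomorphism of Theorem \ref{hermansp} the classes represented by genuine endomorphisms correspond precisely to $\mathbb{Z}_p$ (this is what converts ``genuine endomorphism for $p\nmid N$'' into membership in $\mathbb{Z}_p$, and what lets you choose defect-free representatives for $x_p\in\mathbb{Z}_p$ in the inverse map); second, that vanishing of all diagonal classes forces $f$ itself to be almost-zero, which needs that $\langle D\rangle$ is finite and that all off-diagonal images lie in fixed finite torsion subgroups independent of $p$---it follows from your lemmas, but it is bookkeeping worth recording, not pure formality. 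Your treatment of $\qend(\mathbb{Q})\cong\mathbb{R}\times\qend(\mathbb{Q}/\mathbb{Z})$ via the slope $\alpha(f)=\lim_{n\to\infty} f(n)/n$, the uniform bound $|f(x)-\alpha x|\le M$ on all of $\mathbb{Q}$ (which does hold, by comparing $f(x)$ with $f(nx)/n$ along multiples of the denominator of $x$), and the section $s:\mathbb{Q}/\mathbb{Z}\to[0,1)\cap\mathbb{Q}$ is also correct, the crucial observation being the one you identify: $f(\mathbb{Z})\subseteq\frac{1}{L}\mathbb{Z}$ for a fixed $L$, so the discrepancy between $\overline{f\circ g}$ and $\bar{f}\circ\bar{g}$, and the image of $f$ in the injectivity step, are finitely valued.
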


We generalize these results by considering $S^{-1}\mathbb Z/\mathbb Z$ for an arbitrary multiplicatively closed subset $S$ of $\mathbb Z$.

\begin{defn}
    Let $S$ be a multiplicatively closed subset of $\mathbb Z$. Consider the set $\mathbb Z \times S$ under the equivalence relation of $(n, s_1) \sim (m, s_2)$ if and only if there exists $s \in S$ such that $s(ns_2 - ms_1) = 0$. This set forms a ring, the \textit{localization of} $\mathbb{Z}$ \textit{by} $S$ (denoted $S^{-1} \mathbb Z$) under the operations
    \begin{align*}
        (n, s_1) + (m, s_2) &:= (ns_2 + ms_1, s_1 s_2) \\
        (n, s_1) \cdot (m, s_2) &:= (nm, s_1s_2)
    \end{align*}
\end{defn}

We often denote the pair $(n, s)$ by $\frac{n}{s}$, since the operations defined above resemble fraction addition and multiplication.

\begin{remark}
    The above definition of localization holds for an arbitrary commutative ring $R$ in place of $\mathbb Z$ and an arbitrary multiplicatively closed subset $S \subset R$. However, in this paper we will only be concerned with the case $R = \mathbb Z$. In that case, since $\mathbb Z$ is an integral domain, the condition for $(n, s_1) \sim (m, s_1)$ simplifies to $ns_2 - ms_1 = 0$.
\end{remark}

We observe that there are many multiplicatively closed subsets of $\mathbb Z$ which do not have a finite set of generators. An example is all numbers that are $1$ mod $17$, which cannot have a finite set of generators because there are infinitely many primes which are $1$ mod $17$. Moreover, it is not the case that every multiplicatively closed subset of $\mathbb Z$ is generated by some subset of the primes, as in the above example. For example, the set generated by $2p$ for all $p \geq 3$ cannot be generated by some subset of the primes, because it does not contain any prime other than $2$.

However, for the purposes of examining localizations of $\mathbb Z$, the situation is not nearly as bad as the examples above might make it seem. In particular, every localization $S^{-1} \mathbb Z$ is isomorphic to a localization $T^{-1} \mathbb Z$ where $T$ is generated by some subset of the primes and $-1$.

\begin{defn}
    Let $S$ be a multiplicatively closed subset of $\mathbb Z$. Define the \textit{saturation} of $S$, denoted $\hat{S}$, to be \[\hat{S} := \{ r \in \mathbb{Z} : \exists s \in \mathbb Z \text{ s.t. } rs \in S \}\]
\end{defn}

\begin{example}
    Consider the set $S$ mentioned earlier generated by $2p$ for all $p \geq 3$. The saturation of this set must include every prime at least $3$ because for every $p$, there exists $s \in \mathbb Z$, namely $2$, such that $2p \in S$. Moreover, it includes $2$ since $2 \cdot 3 \in S$. Therefore, the saturation of $S$ is just all natural numbers.
\end{example}

\begin{theorem}
\label{saturation}
    For any multiplicatively closed subset $S$ of $\mathbb Z$, \[\hat{S}^{-1} \mathbb Z \cong S^{-1} \mathbb Z.\]
\end{theorem}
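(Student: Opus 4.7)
The plan is to exhibit an explicit ring isomorphism induced by the inclusion $S \subseteq \hat{S}$. First I would note that this inclusion is automatic: any $s \in S$ satisfies $s \cdot 1 \in S$, so $s \in \hat{S}$. I would also first verify the auxiliary fact that $\hat{S}$ is itself multiplicatively closed, since we need it to make sense as a localizing set: if $r_1, r_2 \in \hat{S}$ with $r_i t_i \in S$, then $(r_1 r_2)(t_1 t_2) = (r_1 t_1)(r_2 t_2) \in S$ by multiplicative closure of $S$, so $r_1 r_2 \in \hat{S}$, and clearly $1 \in \hat{S}$.

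Next, I would define the natural map $\varphi: S^{-1}\mathbb{Z} \to \hat{S}^{-1}\mathbb{Z}$ sending $\frac{n}{s} \mapsto \frac{n}{s}$. Well-definedness is immediate since the equivalence $(n, s_1) \sim (m, s_2)$ in $S^{-1}\mathbb{Z}$ (which by the domain remark reduces to $ns_2 = ms_1$) is the very same relation in $\hat{S}^{-1}\mathbb{Z}$. It is a ring homomorphism because the addition and multiplication formulas are identical on both sides. Injectivity follows for the same reason: $\frac{n}{s_1} = \frac{m}{s_2}$ in $\hat{S}^{-1}\mathbb{Z}$ means $ns_2 = ms_1$ as integers, which is precisely equivalence in $S^{-1}\mathbb{Z}$.

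The heart of the argument is surjectivity. Given $\frac{n}{r} \in \hat{S}^{-1}\mathbb{Z}$ with $r \in \hat{S}$, by definition there exists $t \in \mathbb{Z}$ such that $rt \in S$. I would then claim that the element $\frac{nt}{rt} \in S^{-1}\mathbb{Z}$ (whose denominator $rt$ lies in $S$) maps under $\varphi$ to $\frac{n}{r}$. Checking this amounts to verifying $(nt)\cdot r = n \cdot (rt)$, which is a trivial identity in $\mathbb{Z}$. Hence $\varphi$ is surjective, and therefore an isomorphism.

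The main potential obstacle is the degenerate case $0 \in S$, where $S^{-1}\mathbb{Z}$ collapses to the zero ring. In that case, for every $r \in \mathbb{Z}$ we have $r \cdot 0 = 0 \in S$, so $\hat{S} = \mathbb{Z}$ and $\hat{S}^{-1}\mathbb{Z}$ is also the zero ring, and the isomorphism is trivial. When $0 \notin S$, the witness $t$ with $rt \in S$ must be nonzero, and one uses that $\mathbb{Z}$ is an integral domain exactly where we cancelled above; this is already invoked in the simplification of the localization relation noted in the paper's remark following the definition of $S^{-1}\mathbb{Z}$. Aside from handling this edge case, the argument is essentially formal, and I would not expect any genuine difficulty.
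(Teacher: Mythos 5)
Your proof is correct and follows essentially the same route as the paper: the inclusion $S \subseteq \hat{S}$ gives one containment, and your surjectivity witness $\frac{nt}{rt}$ with $rt \in S$ is exactly the paper's observation that $\frac{1}{r} \sim \frac{s}{rs} \in S^{-1}\mathbb{Z}$ for $r \in \hat{S}$. Your extra housekeeping (checking that $\hat{S}$ is multiplicatively closed, phrasing the identification as an explicit map, and handling the degenerate case $0 \in S$) only adds rigor the paper leaves implicit.
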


\begin{proof}
    It is clear that $\hat{S}$ is at least as large as $S$, because given $s \in S$, there exists some element of $\mathbb Z$, namely $1$, such that $s \cdot 1 \in S$. So $S^{-1} \mathbb Z / \mathbb Z$ is a subring of $\hat{S}^{-1} \mathbb Z / \mathbb Z$. For the other direction, observe that for all $r \in \hat{S}$, there exists $s \in \mathbb Z$ such that $rs \in S$, by definition. Then $\frac{s}{rs} \in S^{-1} \mathbb Z/\mathbb Z$, but using the equivalence relation this is equivalent to $\frac{1}{r}$. So $\frac{1}{r} \in S^{-1} \mathbb Z/\mathbb Z$ for all $r \in \hat{S}$, which implies $S^{-1} \mathbb Z/\mathbb Z \supset \hat{S}^{-1} \mathbb Z / \mathbb Z$ as $1$ generates $\mathbb Z$.
\end{proof}

In other words, to find $\qend(S^{-1}\mathbb Z/\mathbb Z)$, it suffices to consider the saturations of every multiplicatvely closed subset of $\mathbb Z$.

\begin{lemma}
    The saturation of every multiplicatively closed subset of $\mathbb Z$ is generated by some subset of the primes and $-1$.
\end{lemma}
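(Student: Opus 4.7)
The plan is to identify the natural candidate generators of $\hat{S}$ and verify they suffice. Fix a multiplicatively closed subset $S \subseteq \mathbb{Z}$; since $S$ is multiplicatively closed we have $1 \in S$, and we may assume $0 \notin S$ (otherwise $\hat{S} = \mathbb{Z}$ and $S^{-1}\mathbb{Z}$ is the zero ring, a degenerate case).

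First I would establish two elementary closure properties of $\hat{S}$. Namely, $\hat{S}$ is multiplicatively closed: if $r_1, r_2 \in \hat{S}$ with witnesses $s_1, s_2$ (so $r_i s_i \in S$), then $(r_1 r_2)(s_1 s_2) = (r_1 s_1)(r_2 s_2) \in S$. And $\hat{S}$ is closed under divisibility: if $d \mid r$ with $r = dk$ and $rs \in S$, then $d \cdot (ks) = rs \in S$, so $d \in \hat{S}$. I would then observe that $-1 \in \hat{S}$: for any $t \in S$ we have $(-1)(-t) = t \in S$.

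Next I would define $P := \{p : p \text{ is a positive prime and } p \in \hat{S}\}$ and let $T$ denote the multiplicatively closed subset of $\mathbb{Z}$ generated by $\{-1\} \cup P$. The inclusion $T \subseteq \hat{S}$ is immediate: every generator lies in $\hat{S}$ by the preceding step, and $\hat{S}$ is multiplicatively closed. For the reverse inclusion, take any $r \in \hat{S}$; since $0 \notin \hat{S}$ we have $r \neq 0$, so by unique factorization we may write
\[ r = \epsilon \cdot p_1^{a_1} \cdots p_k^{a_k}, \qquad \epsilon \in \{\pm 1\}, \; p_i \text{ positive primes}. \]
Each $p_i$ divides $r$, hence lies in $\hat{S}$ by closure under divisibility, hence lies in $P$. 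Thus $r$ is expressible as a product of elements of $\{-1\} \cup P$, so $r \in T$. This gives $\hat{S} = T$, as required.

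There is no serious obstacle here; the only subtle point is recognizing that $-1$ automatically lies in $\hat{S}$ whenever $S$ is nonempty, so it can always be included among the generators. (If one prefers a tighter statement, $-1$ need only be included precisely when $\hat{S}$ contains a negative integer, but the lemma as stated permits either convention.) The degenerate case $0 \in S$ is dispatched by the opening remark.
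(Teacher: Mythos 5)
Your proof is correct, and it takes a slightly different route than the paper's. The paper fixes a generating set of $S$ and argues that the primes dividing those generators (together with $-1$ when some generator is negative) both lie in $\hat{S}$ and generate it; your argument never chooses generators of $S$ at all, but instead works intrinsically with $\hat{S}$: you verify that $\hat{S}$ is multiplicatively closed, closed under taking divisors, and contains $-1$, and then unique factorization immediately gives $\hat{S} = \langle \{-1\} \cup P\rangle$ where $P$ is the set of positive primes in $\hat{S}$. This buys two things. First, it fills in a step the paper leaves implicit (that every element of $\hat{S}$ is, up to sign, a product of primes already known to lie in $\hat{S}$ --- in the paper's version this requires noting that an element of $\hat{S}$ divides a product of generators of $S$). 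Second, your observation that $-1 \in \hat{S}$ unconditionally (via the witness $s = -t$) is actually the right way to handle signs under the paper's definition of saturation, which allows any integer witness; the paper adjoins $-1$ only when a generator of $S$ is negative, which under-counts $\hat{S}$ when $S$ consists of positive integers. Your explicit dispatch of the degenerate case $0 \in S$ is also a reasonable precaution that the paper does not address.
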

\begin{proof}
    Consider a set of generators of the multiplicatively closed subset $S$. If any generator is negative, then using Theorem \ref{saturation}, $-1 \in \hat{S}$. For every generator $g$, by Theorem \ref{saturation} any prime $p$ dividing $g$ must be in $\hat{S}$. Moreover, the set generated by every prime dividing $g$ must contain $g$. Hence the set containing every prime that divides some generator generates $\hat{S}$, and is contained in $\hat{S}$. So it equals $\hat{S}$, which proves the claim. 
\end{proof}

As another simplification, we observe that if $S$ is a saturation of some set which includes $-1$ in its generators, then $S = T \cup (-T)$ where $T = S \cap \mathbb N$. We claim \[T^{-1} \mathbb{Z} \cong S^{-1} \mathbb{Z}.\] This is true because every negative element of $S^{-1} \mathbb{Z}$, say $\frac{-s}{n}$, is equivalent to an element where the numerator is positive, namely $\frac{s}{-n}$, which is in $T^{-1} \mathbb Z$. So in general it suffices to consider sets $S$ which are generated by some subset of the primes. From now on we only consider these sorts of sets. \newline

Motivated by the techniques in \cite{hermans}, we split up any finitely generated $S$ and relate the rings of quasi-homomorphisms.

\begin{proposition}
\label{crt}
    Let $\{ p_1, \ldots, p_k \}$ and $\{q_1, \ldots, q_\ell \}$ be two sets of distinct primes that do not overlap. Then \[\mathbb{Z}\left[\frac{1}{p_1}, \ldots, \frac{1}{p_k}\right]\Big/\mathbb{Z} \times \mathbb{Z}\left[\frac{1}{q_1}, \ldots, \frac{1}{q_\ell}\right]\Big/\mathbb{Z} \cong \mathbb{Z}\left[\frac{1}{p_1}, \ldots, \frac{1}{p_k}, \frac{1}{q_1}, \ldots, \frac{1}{q_\ell}\right]\Big/\mathbb{Z}\] as groups.
\end{proposition}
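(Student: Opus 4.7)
The plan is to construct an explicit isomorphism via pointwise addition. Set $P := p_1 \cdots p_k$ and $Q := q_1 \cdots q_\ell$. Note that every element of $\mathbb{Z}[1/p_1,\ldots,1/p_k]/\mathbb{Z}$ has a representative of the form $a/P^n$ for some $a \in \mathbb{Z}$ and $n \geq 0$, since any product of powers of the $p_i$ divides a sufficiently large power of $P$. Similarly every element of $\mathbb{Z}[1/q_1,\ldots,1/q_\ell]/\mathbb{Z}$ has a representative $b/Q^m$, and every element of the right-hand side has a representative $c/(P^n Q^m)$. The crucial input is that $\gcd(P,Q) = 1$, since the two sets of primes are disjoint.

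I would then define $\phi \colon \mathbb{Z}[1/P]/\mathbb{Z} \times \mathbb{Z}[1/Q]/\mathbb{Z} \to \mathbb{Z}[1/(PQ)]/\mathbb{Z}$ by $\phi(x,y) = x+y$, where I am abusing notation by writing $\mathbb{Z}[1/P]$ for the first ring and similarly for the others. Well-definedness and the homomorphism property are immediate because the inclusions of both factor rings into $\mathbb{Z}[1/(PQ)]$ carry $\mathbb{Z}$ to $\mathbb{Z}$ and addition is already additive.

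For surjectivity I would take an arbitrary class with representative $c/(P^n Q^m)$, apply B\'ezout to the coprime pair $P^n, Q^m$ to obtain integers $\alpha, \beta$ with $\alpha P^n + \beta Q^m = 1$, and then write
\[\frac{c}{P^n Q^m} = \frac{c\alpha}{Q^m} + \frac{c\beta}{P^n},\]
so the class is the image of $(c\beta/P^n,\, c\alpha/Q^m)$.

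For injectivity, suppose $\phi(x,y) = 0$ with $x = a/P^n$ and $y = b/Q^m$ chosen as above. Then $a/P^n + b/Q^m \in \mathbb{Z}$, so $aQ^m + bP^n \equiv 0 \pmod{P^n Q^m}$. Reducing modulo $P^n$ gives $aQ^m \equiv 0 \pmod{P^n}$, and since $\gcd(Q^m, P^n) = 1$ this forces $P^n \mid a$, i.e. $x \in \mathbb{Z}$ and thus $x = 0$ in the quotient; symmetrically $y = 0$. The main thing to watch is choosing representatives with denominators that are pure powers of $P$ and $Q$ respectively so that the coprimality argument applies cleanly; once that is done the argument is a routine Chinese Remainder computation, which is precisely the intuition behind the proposition's label.
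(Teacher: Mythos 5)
Your proof is correct, and while it rests on the same coprimality/CRT idea as the paper's, the mechanics are genuinely different. The paper constructs the isomorphism $(\frac{a}{n},\frac{b}{m})\mapsto\frac{x}{nm}$ where $x$ is the CRT solution of $x\equiv a\pmod{n}$, $x\equiv b\pmod{m}$, checks additivity via uniqueness of CRT solutions, and exhibits an explicit two-sided inverse $\frac{x}{nm}\mapsto(\frac{x}{n},\frac{x}{m})$. You instead use the natural addition map $(x,y)\mapsto x+y$ out of the product, after normalizing representatives to denominators $P^n$ and $Q^m$ with $P=p_1\cdots p_k$ and $Q=q_1\cdots q_\ell$; well-definedness and additivity are then automatic, surjectivity is B\'ezout applied to the coprime pair $P^n, Q^m$, and injectivity is the observation that $P^nQ^m \mid aQ^m+bP^n$ forces $P^n\mid a$ and $Q^m\mid b$. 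Note that your map is not literally the paper's (yours differs from the CRT map by multiplication by units modulo each denominator), but both are isomorphisms. What the paper's route buys is an explicit inverse and a map that directly mirrors the classical isomorphism $\mathbb{Z}/nm\mathbb{Z}\cong\mathbb{Z}/n\mathbb{Z}\times\mathbb{Z}/m\mathbb{Z}$; what yours buys is economy, since nothing about the map itself needs verification and the whole argument reduces to two short coprimality facts. The one step you should keep explicit in a final write-up is the normalization of denominators to pure powers of $P$ and $Q$ (equivalently, $\mathbb{Z}[\frac{1}{p_1},\ldots,\frac{1}{p_k}]=\mathbb{Z}[\frac{1}{P}]$), which you already flag as the hinge of the coprimality argument.
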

\begin{proof}
    Define the map
    \begin{align*}
        \phi: \mathbb{Z}\left[\frac{1}{p_1}, \ldots, \frac{1}{p_k}\right]\Big/\mathbb{Z} \times \mathbb{Z}\left[\frac{1}{q_1}, \ldots, \frac{1}{q_\ell}\right]\Big/\mathbb{Z} &\to \mathbb{Z}\left[\frac{1}{p_1}, \ldots, \frac{1}{p_k}, \frac{1}{q_1}, \ldots, \frac{1}{q_\ell}\right]\Big/\mathbb{Z} \\
        \left(\frac{a}{n}, \frac{b}{m} \right) &\to \frac{x}{nm}
    \end{align*}
    where $x$ satisfies $x \equiv a \bmod{n}$ and $x \equiv b \bmod{m}$. We know there exists a unique solution to this system of congruences since $n$ is the product of powers of $p_i$ and $m$ is the product of powers of $q_i$, so they are relatively prime, so applying CRT guarantees a unique solution. To show this map is a homomorphism, consider two arbitrary elements of $\mathbb{Z}\left[\frac{1}{p_1}, \ldots, \frac{1}{p_k}\right]\Big/\mathbb{Z} \times \mathbb{Z}\left[\frac{1}{q_1}, \ldots, \frac{1}{q_\ell}\right]\Big/\mathbb{Z}$, and without loss of generality we can assume the denominators of the first components are equal, and the denominators of the second components are equal, by writing the fractions with a common denominator. Then \[\phi \left( \frac{a_1}{n}, \frac{b_1}{m} \right) + \phi \left( \frac{a_2}{n}, \frac{b_2}{m} \right) = \frac{x_1}{nm} + \frac{x_2}{nm} = \frac{x_1+x_2}{nm}\] where
    \begin{align*}
        x_1 &\equiv a_1 \bmod{n}, x_1 \equiv b_1 \bmod{m} \\
        x_2 &\equiv a_2 \bmod{n}, x_2 \equiv b_2 \bmod{m}
    \end{align*}

    These imply that $x_1+x_2 \equiv a_1 + a_2 \bmod{n}$ and $x_1+x_2 \equiv b_1 + b_2 \bmod{m}$. Since the numerator of $\phi \left( \frac{a_1+a_2}{n}, \frac{b_1+b_2}{m} \right)$ also satisfies these equations, and these equations have a unique solution by CRT, we have that $\phi$ is a homomorphism.

    To show that this is an isomorphism, we observe that the inverse is given by 
    \begin{align*}
        \phi^{-1}: \mathbb{Z}\left[\frac{1}{p_1}, \ldots, \frac{1}{p_k}, \frac{1}{q_1}, \ldots, \frac{1}{q_\ell}\right]\Big/\mathbb{Z} &\to \mathbb{Z}\left[\frac{1}{p_1}, \ldots, \frac{1}{p_k}\right]\Big/\mathbb{Z} \times \mathbb{Z}\left[\frac{1}{q_1}, \ldots, \frac{1}{q_\ell}\right]\Big/\mathbb{Z} \\
        \frac{x}{nm} &\to \left(\frac{x}{n}, \frac{x}{m} \right)
    \end{align*}
    which is clearly a homomorphism since \[\phi \left( \frac{x_1}{nm} \right) + \phi \left( \frac{x_2}{nm} \right) = \left( \frac{x_1}{n}, \frac{x_1}{m} \right) + \left( \frac{x_2}{n}, \frac{x_2}{m} \right) = \left( \frac{x_1+x_2}{n}, \frac{x_1+x_2}{m} \right)\] By construction, $\phi^{-1}$ is the inverse of $\phi$.
\end{proof}

Hermans \cite{hermans} proves that \textsf{Qab} is an additive category, which results in the following very useful lemma relating the ring of quasi-endomorphisms of a direct sum of groups to the rings of quasi-endomorphisms of each of the groups.

\begin{lemma}\cite{hermans}
\label{combine}
    Let $A, B \in \text{Obj}(\textsf{Qab})$. If \[ \qhom(A, B) = \qhom(B, A) = 0,\] then \[\qend(A \times B) \cong \qend(A) \times \qend(B).\]
\end{lemma}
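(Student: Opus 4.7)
The plan is to construct an explicit ring isomorphism $\Phi: \qend(A \times B) \to \qend(A) \times \qend(B)$ by decomposing each almost-endomorphism $f: A \times B \to A \times B$ into four ``matrix'' components via the canonical projections $\pi_A, \pi_B$ and inclusions $\iota_A, \iota_B$. For a representative $f$, set $f_{XY} := \pi_Y \circ f \circ \iota_X$ for $X, Y \in \{A, B\}$; a short check shows each $f_{XY}$ is itself an almost-homomorphism. Applying the almost-homomorphism condition to the decomposition $(a, b) = \iota_A(a) + \iota_B(b)$ gives that $f(a, b) - f(a, 0) - f(0, b)$ ranges over a finite set, so modulo $\Az$ the function $f$ is represented by the map $(a, b) \mapsto (f_{AA}(a) + f_{BA}(b),\, f_{AB}(a) + f_{BB}(b))$. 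I then define $\Phi([f]) := ([f_{AA}], [f_{BB}])$.

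The hypothesis $\qhom(A, B) = \qhom(B, A) = 0$ is what makes the off-diagonal pieces disappear: $f_{AB}$ and $f_{BA}$ are almost-zero by definition, so modulo $\Az$ the function $f$ is represented by the diagonal map $(a, b) \mapsto (f_{AA}(a), f_{BB}(b))$. Well-definedness of $\Phi$ on equivalence classes and its additivity follow from the elementary facts that pre- and post-composition with a genuine group homomorphism sends $\Az$ into $\Az$, and that $\pi_X, \iota_X$ are genuine homomorphisms; these are routine to verify.

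The main obstacle is multiplicativity under composition. For this I must show $(f \circ g)_{AA} \sim f_{AA} \circ g_{AA}$ and symmetrically for $(BB)$. Unwinding,
\[
(f \circ g)_{AA}(a) \;=\; \pi_A\bigl(f(g_{AA}(a),\, g_{AB}(a))\bigr).
\]
Since $f$ is an almost-homomorphism, $f(g_{AA}(a), g_{AB}(a))$ differs from $f(g_{AA}(a), 0) + f(0, g_{AB}(a))$ by an element of a finite set, so its $A$-component equals $f_{AA}(g_{AA}(a)) + f_{BA}(g_{AB}(a))$ modulo a finite set. The crucial step is that by hypothesis $g_{AB}$ has finite range, so $f_{BA}(g_{AB}(a))$ also has finite range and vanishes in $\qend(A)$. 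The symmetric computation handles $(BB)$.

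Finally, surjectivity is immediate: given $([g], [h]) \in \qend(A) \times \qend(B)$, the map $(a, b) \mapsto (g(a), h(b))$ is clearly an almost-homomorphism and $\Phi$ sends it to $([g], [h])$. For injectivity, if $[f_{AA}] = [f_{BB}] = 0$, then combined with $[f_{AB}] = [f_{BA}] = 0$ from the hypothesis, the representation of $f$ described above shows $f$ itself is almost-zero. Hence $\Phi$ is a ring isomorphism.
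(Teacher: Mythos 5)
Your proof is correct: the four components $f_{XY} = \pi_Y \circ f \circ \iota_X$ are indeed almost-homomorphisms, the defect set of $f$ shows $f$ agrees with the ``matrix'' map up to a finite-image error, the hypothesis $\qhom(A,B) = \qhom(B,A) = 0$ kills the off-diagonal entries both in the representation of $f$ and (via $f_{BA}\circ g_{AB}$ having finite image) in the verification that $\Phi$ respects composition, and your surjectivity and injectivity arguments go through. Note, though, that the paper does not prove this lemma at all: it cites Hermans, deriving it from the fact that \textsf{Qab} is an additive category, in which $A \times B$ is a biproduct, so $\qend(A \times B)$ is automatically the $2\times 2$ matrix ring built from the four groups $\qhom(X,Y)$, and the vanishing of the off-diagonal entries collapses it to $\qend(A) \times \qend(B)$. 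Your argument is the explicit, element-level version of exactly that matrix decomposition. The categorical route gets the statement essentially for free once the additivity of \textsf{Qab} is established and generalizes to any additive category; your route is self-contained and requires no category-theoretic machinery, at the price of the hands-on bookkeeping you carry out (well-definedness modulo $\Az$, additivity, and the composition estimate). The only cosmetic slip is the phrase ``$f_{AB}$ and $f_{BA}$ are almost-zero by definition''---they are almost-zero by the hypothesis that the corresponding quasi-homomorphism groups vanish, which is clearly what you meant.
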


We now show that the conditions of Lemma \ref{combine} are satisfied.

\begin{lemma}
\label{hom0}
    Let $\{ p_1, \ldots, p_k \}$ and $\{q_1, \ldots, q_\ell \}$ be two sets of distinct primes that do not overlap. Then \begin{align*} \qhom \left(\mathbb{Z}\left[ \frac{1}{p_1}, \ldots, \frac{1}{p_k} \right] \Big/ \mathbb Z, \mathbb{Z}\left[ \frac{1}{q_1}, \ldots, \frac{1}{q_\ell} \right] \Big/ \mathbb Z \right) &= 0 \\
    \qhom \left( \mathbb{Z}\left[ \frac{1}{q_1}, \ldots, \frac{1}{q_\ell} \right] \Big/ \mathbb Z , \mathbb{Z}\left[ \frac{1}{p_1}, \ldots, \frac{1}{p_k} \right] \Big/ \mathbb Z \right) &= 0.\end{align*}
\end{lemma}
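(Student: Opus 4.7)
The plan is to show that every almost-homomorphism $f: A \to B$ is already almost-zero, where $A = \mathbb{Z}[1/p_1, \ldots, 1/p_k]/\mathbb{Z}$ and $B = \mathbb{Z}[1/q_1, \ldots, 1/q_\ell]/\mathbb{Z}$; the second equation then follows by symmetry between the two prime sets. The strategy hinges on two elementary torsion facts: every $x \in A$ satisfies $M_x \cdot x = 0$ for some positive integer $M_x$ built only out of the $p_i$'s, while every $y \in B$ has order built only out of the $q_j$'s. Since $\{p_i\} \cap \{q_j\} = \emptyset$, any such $M_x$ is coprime to the order of any element of $B$.

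By hypothesis, the defect set $S_f := \{f(a+b) - f(a) - f(b) : a, b \in A\}$ is a finite subset of $B$, so $S_f \cup \{f(0)\}$ generates a finite torsion subgroup of $B$ and there is a positive integer $N$ (a product of powers of the $q_j$'s) with $N \cdot s = 0$ for all $s \in S_f \cup \{f(0)\}$. A routine telescoping induction modeled on Lemma \ref{linear bound} shows that $f(nx) - n f(x)$ is a sum of $n-1$ elements of $S_f$, for every positive integer $n$ and every $x \in A$. Specializing to $n = M_x$ and using $M_x x = 0$ gives
\[M_x f(x) = f(0) - s_x\]
for some $s_x$ that is a sum of $M_x - 1$ elements of $S_f$. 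Multiplying through by $N$ annihilates the right-hand side term by term, so $N M_x f(x) = 0$; but $f(x) \in B$ already has order coprime to $M_x$, so in fact $N f(x) = 0$, meaning $f(x)$ lies in the $N$-torsion subgroup $B[N]$.

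Finally, $B[N]$ is finite (isomorphic to $\bigoplus_j \mathbb{Z}/q_j^{e_j}\mathbb{Z}$ when $N = \prod_j q_j^{e_j}$), so $f(A) \subset B[N]$ is finite and $f \in \Az(A, B)$, giving $\qhom(A,B) = 0$. The main obstacle is locating the correct uniform annihilator $N$: once one sees that the finiteness of $S_f$ forces a uniform $q_j$-only annihilator for $M_x f(x) - f(0)$, and that this can be transferred onto $f(x)$ itself using coprimality of $M_x$ and $N$, the rest of the argument reduces to basic torsion bookkeeping. The companion statement $\qhom(B,A) = 0$ is obtained by interchanging the roles of the two prime sets throughout.
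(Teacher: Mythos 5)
Your proof is correct and is essentially the paper's argument in a different (and slightly cleaner) packaging: the paper likewise uses finiteness of the defect set to get a uniform bound (the lcm $L$ of the defect denominators), telescopes $f$ over $n$ copies of $a/n$ so that $n f(a/n)$ differs from $f(0)$ by a sum of defects, and then uses coprimality of the $p$-built denominator $n$ and the $q$-built denominator $m$ — exactly your coprimality of $M_x$ and the order of $f(x)$, since order equals reduced denominator in these groups. The only difference is that you run the argument directly, concluding $f(A)\subseteq B[N]$ with $B[N]$ finite, whereas the paper argues by contradiction with an image element of denominator exceeding $L$; there is no gap in your version.
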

\begin{proof}
    Since the $p_i$ and $q_i$ are symmetric, it suffices to prove the first of these statements. Let \[f: \mathbb{Z}\left[ \frac{1}{p_1}, \ldots, \frac{1}{p_k} \right] \Big/ \mathbb Z \to \mathbb{Z}\left[ \frac{1}{q_1}, \ldots, \frac{1}{q_\ell} \right] \Big/ \mathbb Z\] be a quasi-homomorphism. We want to show that $f$ is almost zero, that is, its image is finite.

    Since $f$ is a quasi-homomorphism, the set \[N_f := \left\{ f(a+b)-f(a)-f(b) : a, b \in \mathbb{Z}\left[ \frac{1}{p_1}, \ldots, \frac{1}{p_k} \right] \Big/ \mathbb Z \right\}\] is finite. Since for all $n$, there are at most $n$ equivalence classes in the quotient group $\mathbb{Z}[ \frac{1}{p_1}, \ldots, \frac{1}{p_k} ] \big/ \mathbb Z$ which have a denominator of $n$, this implies that the denominators of the elements of the set are bounded. In particular, there are only finitely many distinct denominators. Let $L$ be the lcm of these denominators.

    Now suppose for the sake of contradiction that $f$ were not almost zero, that is, its image is infinite. This implies that the denominators of the images are unbounded, so there exists $\frac{a}{n} \in \mathbb{Z}[ \frac{1}{p_1}, \ldots, \frac{1}{p_k} ] \big/ \mathbb Z$ such that \[ f \left( \frac{a}{n} \right) = \frac{b}{m} \text{ and } m > L.\] Then since $f$ is a quasi-homomorphism,
    \begin{align*}
        f \left( \frac{2a}{n} \right) - f \left( \frac{a}{n} \right) - f \left( \frac{a}{n} \right) &\in N_f \\
        f \left( \frac{3a}{n} \right) - f \left( \frac{2a}{n} \right) - f \left( \frac{a}{n} \right) &\in N_f \\
        &\hspace{-1.7cm}\vdots \\
        f \left( \frac{na}{n} \right) - f \left( \frac{(n-1)a}{n} \right) - f \left( \frac{a}{n} \right) &\in N_f
    \end{align*}
    Each of the expressions on the left-hand side has a denominator dividing $L$, by the definition of $L$. Hence writing each expression with a denominator of $L$ and adding them together gives a fraction whose denominator divides $L$. In particular, we have that \[ f \left( \frac{na}{n} \right) - n f \left( \frac{a}{n} \right) = f(0) - n f \left( \frac{a}{n} \right) = -n \left( \frac{b}{m} \right) = - \frac{bn}{m}\] must have a denominator dividing $L$. Now recall that $\frac{a}{n} \in \mathbb{Z}[ \frac{1}{p_1}, \ldots, \frac{1}{p_k} ] / \mathbb Z$ and $\frac{b}{m} \in \mathbb{Z} [ \frac{1}{q_1}, \ldots, \frac{1}{q_\ell} ] / \mathbb Z$. So $n$ is the product of powers of $p_i$ and $m$ is the product of powers of $q_i$. Since $p_i$ and $q_i$ do not overlap, $\gcd(n, m) = 1$. So the denominator of $-\frac{bn}{m}$ is the same as that of $\frac{-b}{m}$, which is $m$. By construction, $m>L$, a contradiction.

    Therefore all quasi-homomorphisms from $\mathbb{Z}\left[ \frac{1}{p_1}, \ldots, \frac{1}{p_k} \right] \Big/ \mathbb Z \to \mathbb{Z}\left[ \frac{1}{q_1}, \ldots, \frac{1}{q_\ell} \right] \Big/ \mathbb Z$ are almost-zero, so the set of quasi-homomorphisms is $0$, as desired.
\end{proof}

\begin{remark}
Informally, the proof finds some element whose image which has a large denominator, and then adds it to itself many times to give an element which is eventually an integer, giving a contradiction. This is the same idea as the proof of Proposition 2.9 in \cite{hermans}, except that in this case we use the size of the denominator (similar to the $p$-adic absolute value) as a measure of how ``large" the image is, rather than absolute value.
\end{remark}

We now prove the main result.

\begin{theorem}
    Let $S$ be a multiplicatively closed subset of $\mathbb Z$ and let $\hat{S}$ denote its saturation. Suppose that $\hat{S}$ is generated by finitely many primes $p_1, p_2, \ldots, p_k$. Then \[\qend \left( S^{-1} \mathbb Z / \mathbb Z \right) \cong \mathbb{Q}_{p_1} \times \mathbb{Q}_{p_2} \times \ldots \times \mathbb{Q}_{p_k}. \]
\end{theorem}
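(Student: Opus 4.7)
The plan is to reduce the theorem to a combination of the three main tools developed earlier in this section: the saturation isomorphism (Theorem \ref{saturation}), the Chinese Remainder Theorem decomposition (Proposition \ref{crt}), the vanishing of cross quasi-homomorphisms (Lemma \ref{hom0}), together with Hermans' single-prime result (Theorem \ref{hermansp}).

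First, I would apply Theorem \ref{saturation} to replace $S$ with $\hat{S}$ without loss of generality, so that $S^{-1}\mathbb{Z}/\mathbb{Z} \cong \mathbb{Z}[\frac{1}{p_1}, \ldots, \frac{1}{p_k}]/\mathbb{Z}$. Next, I would peel off primes one at a time using Proposition \ref{crt}: an easy induction on $k$ gives a group isomorphism
\[
\mathbb{Z}\!\left[\tfrac{1}{p_1}, \ldots, \tfrac{1}{p_k}\right]\Big/\mathbb{Z} \;\cong\; \prod_{i=1}^{k} \mathbb{Z}\!\left[\tfrac{1}{p_i}\right]\Big/\mathbb{Z},
\]
where at each inductive step we apply Proposition \ref{crt} with $\{p_1, \ldots, p_j\}$ against the singleton $\{p_{j+1}\}$, which is disjoint from the former. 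Since isomorphic groups have isomorphic $\qend$ rings, it then suffices to compute the $\qend$ of the right-hand product.

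To pass from the product of groups to a product of $\qend$ rings, I would iterate Lemma \ref{combine}. The key hypothesis at each stage is the vanishing of cross quasi-homomorphisms, which is precisely what Lemma \ref{hom0} provides: if we set $A_j := \mathbb{Z}[\frac{1}{p_j}]/\mathbb{Z}$ and $B_j := \prod_{i>j} A_i$, then $B_j$ is (isomorphic to) $\mathbb{Z}[\frac{1}{p_{j+1}}, \ldots, \frac{1}{p_k}]/\mathbb{Z}$ by another application of (iterated) Proposition \ref{crt}, and Lemma \ref{hom0} gives $\qhom(A_j, B_j) = \qhom(B_j, A_j) = 0$ because the generating primes are disjoint. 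Hence Lemma \ref{combine} yields $\qend(A_j \times B_j) \cong \qend(A_j) \times \qend(B_j)$, and a finite induction produces
\[
\qend\!\left( \prod_{i=1}^{k} \mathbb{Z}\!\left[\tfrac{1}{p_i}\right]\Big/\mathbb{Z} \right) \;\cong\; \prod_{i=1}^{k} \qend\!\left( \mathbb{Z}\!\left[\tfrac{1}{p_i}\right]\Big/\mathbb{Z} \right).
\]
Finally, Theorem \ref{hermansp} identifies each factor with $\mathbb{Q}_{p_i}$, completing the proof.

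I expect the main technical obstacle to lie in the iterated application of Lemma \ref{combine}: one must carefully justify that the ``remaining product'' $B_j$ is itself of the form $\mathbb{Z}[\frac{1}{p_{j+1}}, \ldots, \frac{1}{p_k}]/\mathbb{Z}$ (up to group isomorphism) so that Lemma \ref{hom0} can be invoked with genuinely disjoint prime sets, rather than some mixed product for which the vanishing hypothesis has not been established. This is essentially bookkeeping — combining Proposition \ref{crt} at every inductive stage — but it is the step where a careless exposition could hide a gap. Once the bookkeeping is set up, the ring isomorphism is immediate, and all remaining claims (that the induced isomorphism respects the ring structure, not merely the additive one) follow from the categorical additivity of $\mathsf{Qab}$ already invoked in Lemma \ref{combine}.
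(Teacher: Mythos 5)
Your proposal is correct and follows essentially the same route as the paper: reduce via Theorem \ref{saturation} and iterated Proposition \ref{crt} to a product of the groups $\mathbb{Z}[\frac{1}{p_i}]/\mathbb{Z}$, then apply Lemma \ref{combine} repeatedly (with the hypothesis supplied by Lemma \ref{hom0}) and finish with Hermans' identification of each factor with $\mathbb{Q}_{p_i}$. Your extra attention to the bookkeeping in the iterated step is a reasonable refinement of what the paper leaves implicit, but it is not a different argument.
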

\begin{proof}
From Theorem \ref{saturation} and Proposition \ref{crt}, we have that, as abelian groups,
\begin{align*}
    S^{-1} \mathbb Z / \mathbb Z &\cong \hat{S}^{-1} \mathbb Z / \mathbb Z \\
    &\cong \mathbb{Z} \left[ \frac{1}{p_1} , \ldots, \frac{1}{p_k} \right] \Big/ \mathbb Z \\
    &\cong \mathbb{Z} \left[ \frac{1}{p_1} \right] \Big/ \mathbb Z \times \cdots \times \mathbb{Z} \left[ \frac{1}{p_k} \right] \Big/ \mathbb Z
\end{align*}
Then by Proposition \ref{hom0}, we can apply Lemma \ref{combine} repeatedly, which gives that
\begin{align*}
    \qhom ( S^{-1} \mathbb{Z} / \mathbb{Z} ) &\cong \qhom \left(\mathbb{Z} \left[ \frac{1}{p_1} \right] \Big/ \mathbb Z \times \cdots \times \mathbb{Z} \left[ \frac{1}{p_k} \right] \Big/ \mathbb Z \right) \\
    &\cong \qhom \left(\mathbb{Z} \left[ \frac{1}{p_1} \right] \Big/ \mathbb Z \times \cdots \times \mathbb{Z} \left[ \frac{1}{p_{k-1}} \right] \Big/ \mathbb Z \right) \times \qhom \left( \mathbb{Z} \left[ \frac{1}{p_k} \right] \Big/ \mathbb{Z} \right)\\
    &\cong \dots \\
    &\cong \qhom \left( \mathbb{Z} \left[ \frac{1}{p_1} \right] \Big/ \mathbb{Z} \right) \times \cdots \times \qhom \left( \mathbb{Z} \left[ \frac{1}{p_k} \right] \Big/ \mathbb{Z} \right)
\end{align*}

From Proposition 2.4 in Hermans \cite{hermans}, we have that $\qhom ( \mathbb{Z} [\frac{1}{p_i}]/\mathbb{Z}) \cong Q_{p_i}$, so we conclude \[\qhom ( S^{-1} \mathbb{Z} / \mathbb{Z} ) \cong \mathbb{Q}_{p_1} \times \cdots \times \mathbb{Q}_{p_k} \]
as desired.
\end{proof}

Based on the result in \cite{hermans} that $\qhom \left( \oplus_{p \text{ prime}} \mathbb{Z}[\frac{1}{p}]/\mathbb Z \right) = \sideset{}{'}\prod_{p \text{ prime}} (\mathbb{Q}_p, \mathbb{Z}_p)$, we conjecture the following, which would give a complete characterization of the ring of quasi-homomorphisms of $S^{-1} \mathbb{Z}/\mathbb{Z}$.

\begin{conjecture}
    Let $S$ be a multiplicatively closed subset of $\mathbb Z$ and let $\hat{S}$ denote its saturation. Suppose that $\hat{S}$ is generated by a possibly infinite number of primes $p_1, p_2, \ldots $. Then \[\qend \left( S^{-1} \mathbb Z / \mathbb Z \right) \cong \sideset{}{'}\prod_{p_i} (\mathbb{Q}_{p_i}, \mathbb{Z}_{p_i}). \]
\end{conjecture}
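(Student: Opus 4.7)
The plan is to adapt Hermans' proof of the $\mathbb{Q}/\mathbb{Z}$ case to an arbitrary index set of primes. By Theorem \ref{saturation} and the discussion afterwards, we may assume $S = \hat{S}$ is generated by a (possibly infinite) set of primes $\{p_i\}_{i \in I}$. Extending Proposition \ref{crt} to an arbitrary index set $I$, which is valid because each element of $S^{-1}\mathbb{Z}/\mathbb{Z}$ has a denominator involving only finitely many primes, yields an isomorphism of abelian groups
\[ S^{-1}\mathbb{Z}/\mathbb{Z} \;\cong\; \bigoplus_{i \in I} A_i, \qquad \text{where } A_i := \mathbb{Z}[1/p_i]/\mathbb{Z}. \]

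I would then define a restriction map $\Phi : \qend\bigl(\bigoplus_{i \in I} A_i\bigr) \to \prod_{i \in I} \mathbb{Q}_{p_i}$ sending a quasi-endomorphism $f$ to the tuple of equivalence classes of $\pi_i \circ f \circ \iota_i$, where $\iota_i$ and $\pi_i$ are the natural inclusion and projection at index $i$. By Lemma \ref{hom0}, the off-diagonal components $\pi_j \circ f \circ \iota_i$ for $j \neq i$ are almost zero, so $\Phi$ loses no essential information; by Theorem \ref{hermansp}, each entry of $\Phi(f)$ lies in $\mathbb{Q}_{p_i}$. The ring-homomorphism and injectivity checks would proceed as in Hermans: if each $\Phi(f)_i$ is almost zero, then on any finitely supported element $\sum_{i \in J} a_i$ of the direct sum, $f(\sum_i a_i)$ differs from $\sum_i f(a_i)$ by at most $|J|-1$ elements of the finite cocycle set, each $f(a_i)$ is itself almost zero, and together this forces $f$ to be almost zero.

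The heart of the proof is matching the image of $\Phi$ with the restricted product. For the forward inclusion, note that the cocycle set $N_f$ is finite, so its elements jointly have support in some finite $J \subset I$. For $i \notin J$, the restriction $\pi_i \circ f \circ \iota_i$ has trivial cocycle, hence is a genuine homomorphism up to almost zero, giving $\Phi(f)_i \in \operatorname{End}(A_i) \cong \mathbb{Z}_{p_i}$. For the reverse inclusion, given a tuple $(x_i)$ in the restricted product with $x_i \in \mathbb{Z}_{p_i}$ for all $i$ outside some finite $J_0$, I would construct $f$ by acting as a chosen representative of $x_i$ on each $A_i$ and extending additively; the ``good'' primes $i \notin J_0$ contribute no cocycle error as genuine endomorphisms, while the finitely many ``bad'' primes $i \in J_0$ contribute a uniformly bounded cocycle. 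The principal obstacle is this surjectivity step: although the local cocycle contributions at good primes vanish individually, one must carefully verify that the globally assembled $f$ has a \emph{finite} cocycle set when inputs have arbitrarily many nonzero components. This compatibility fails for the full (unrestricted) product, and its successful verification is precisely what forces the image of $\Phi$ to be exactly the restricted product.
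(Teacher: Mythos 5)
This statement is left as a conjecture in the paper, so there is no in-paper proof to compare against; judged on its own terms, your proposal is a sensible plan but not yet a proof, and the genuine difficulty is not where you place it. The decomposition $S^{-1}\mathbb{Z}/\mathbb{Z}\cong\bigoplus_i A_i$ with $A_i=\mathbb{Z}[1/p_i]/\mathbb{Z}$ is fine, and your forward inclusion is essentially right: the elements of the finite cocycle set $N_f$ are supported on finitely many coordinates $J$, so for $i\notin J$ the component $\pi_i\circ f\circ\iota_i$ is an honest endomorphism of the Pr\"ufer group $A_i$, hence (under the isomorphism $\qend(A_i)\cong\mathbb{Q}_{p_i}$ of Theorem \ref{hermansp}, which one must check carries $\operatorname{End}(A_i)$ onto $\mathbb{Z}_{p_i}$) gives $\Phi(f)_i\in\mathbb{Z}_{p_i}$. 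The step you single out as the principal obstacle --- surjectivity --- is actually the easy one: defining $f\bigl(\sum_i a_i\bigr):=\sum_i f_i(a_i)$ coordinatewise, the cocycle of $f$ splits as $\sum_i\bigl(f_i(a_i+b_i)-f_i(a_i)-f_i(b_i)\bigr)$; the good primes contribute $0$ because their representatives are genuine endomorphisms, so the cocycle set lies in the finite sumset of the finitely many bad sets $N_{f_i}\cup\{0\}$, with no delicate verification needed (and this is exactly what fails for the unrestricted product, as you note).

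The real gap is injectivity, i.e.\ your assertion that $\Phi$ ``loses no essential information.'' Knowing that every component $\pi_j\circ f\circ\iota_i$ is almost zero does not imply $f$ is almost zero: a map into an infinite direct sum can be almost zero in each coordinate while its image spreads over infinitely many coordinates and is infinite. To close this you need at least: (i) a strengthening of Lemma \ref{hom0} in which the target is $\bigoplus_{j\neq i}A_j$ rather than a single $A_j$ (the paper's proof does generalize, since it only uses boundedness of denominators, but this must be stated and proved); (ii) the observation that a nonzero honest endomorphism of a Pr\"ufer group has infinite image, so for $i\notin J$ a diagonal component representing $0$ is literally zero, not merely almost zero; and (iii) control of sums over arbitrarily large supports: your error of ``at most $|J|-1$ elements of the finite cocycle set'' lies a priori only in the subgroup generated by $N_f$, and you must invoke that this subgroup is finite because $S^{-1}\mathbb{Z}/\mathbb{Z}$ is torsion, together with a uniform bound on the values of the $f(\iota_i a_i)$ as $i$ varies. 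As written, ``together this forces $f$ to be almost zero'' is a non sequitur, and this is precisely the content not covered by the finite-product Lemma \ref{combine}, i.e.\ the actual substance of the conjecture. Multiplicativity of $\Phi$ is likewise asserted rather than checked, though it should follow by similar bookkeeping once the above is supplied.
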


\section{Conclusion and Further Directions}
\label{conclusion}

In this paper, we have explored an alternate construction of the real numbers via functions from $\mathbb Z$ to $\mathbb Z$. We established properties of $\mathbb E$, such as the fact that it is a complete ordered field, and the fact that it is uncountable, without reference to the real numbers. Finally, we generalized the construction of the Eudoxus reals to other abelian groups, thereby construction the $p$-adics and products of $\mathbb{Q}_p$.

One possible generalization of the results in Section \ref{local} is to consider the localization of other rings of integers at prime ideals. One particular example is to consider $\qend (\mathbb{Z}[i, \frac{1}{\pi}]/\mathbb{Z}[i])$, in analogy to $\qend(\mathbb{Z}[\frac{1}{p}]/\mathbb{Z}) \cong \mathbb{Q}_p$. It is conjectured that this is an extension of $\mathbb{Q}_{N(\pi)}$ in the case that $\pi$ is not real, and is the ring of $2$x$2$ matrices over $\mathbb{Q}_{\pi}$ in the case that $\pi \equiv 3 \bmod{4}$ is a real prime. 

Another direction to explore is to find other proofs of important properties by understanding the Eudoxus Reals construction and characterizing the quasi-endomorphisms of the $\mathbb{Z}$. One may work to understand and prove the Heine-Borel Theorem, which helps provide an understanding of the foundations of real analysis.

Finally, one direction to explore is using the close approximations that the convergents of continued fractions provide in order to characterize the set of quasi-endomorphisms and justify the uncountability of the real numbers. Further work can explore and understand the Eudoxus reals using more properties of continued fractions.

\section{Acknowledgements}

We would like to thank Matt Baker for proposing this problem. We would also like to thank our counselor, Yiyang Liu, for his guidance and help.

We thank the PROMYS program and the Clay Mathematical Institute for the opportunity to conduct this research, and John Sim and David Fried for running the research labs. 

\bibliography{citations}
\bibliographystyle{abbrv}

\end{document}